\newtheorem{thm}{Theorem}[section]
\newtheorem{lem}[thm]{Lemma}
\newtheorem{prop}[thm]{Proposition}
\theoremstyle{definition}
\newtheorem{defn}[thm]{Definition}
\newtheorem{rem}[thm]{Remark}
\numberwithin{equation}{section}
\newcommand{\be}{\begin{equation}}
\newcommand{\ee}{\end{equation}}
\newcommand{\bes}{\begin{equation*}}
\newcommand{\ees}{\end{equation*}}
\newcommand{\ba}{\begin{aligned}}
\newcommand{\ea}{\end{aligned}}
\newcommand{\norm}[1]{\big\Vert#1\big\Vert}
\newcommand{\sang}[1]{\langle#1\rangle}
\newcommand{\jump}[1]{\llbracket#1\rrbracket}
\newcommand{\R}{\mathbb R}
\newcommand{\T}{\mathbb{T}}
\newcommand{\p}{\partial}
\newcommand{\re}{\mathrm{e}}
\newcommand{\ri}{\mathrm{i}}
\newcommand{\rd}{\mathrm{d}}
\newcommand{\quadd}{\quad\quad}
\newcommand{\cdn}{\cdot\nabla}
\newcommand{\tr}{\mathrm{tr}\,}
\newcommand{\les}{\lesssim}
\newcommand{\trans}{^{\top}}
\newcommand{\BL}{\Big\{}
\newcommand{\BR}{\Big\}}
\newcommand{\dvg}{\nabla\cdot}
\newcommand{\ten}{\otimes}
\newcommand{\Lap}{\Delta}
\newcommand{\cur}{\nabla\times}
\newcommand{\wb}[1]{\overline{#1}}
\newcommand{\wh}[1]{\widehat{#1}}
\newcommand{\al}{\alpha}
\newcommand{\ga}{\gamma}
\newcommand{\vphi}{\varphi}
\newcommand{\om}{\omega}
\newcommand{\Ga}{\Gamma}
\newcommand{\Om}{\Omega}
\newcommand{\ka}{\kappa}
\newcommand{\Lam}{\Lambda}
\newcommand{\veps}{\varepsilon}
\newcommand{\lam}{\lambda}
\newcommand{\Ups}{\Upsilon}
\newcommand{\Hf}{\mathcal{H}}
\newcommand{\Gf}{\mathcal{G}}
\newcommand{\cE}{\mathcal{E}}
\newcommand{\cF}{\mathcal{F}}
\newcommand{\bp}{\overline{\partial}}
\newcommand{\bx}{\overline{x}}
\newcommand{\Ompm}{\Omega^{\pm}}
\newcommand{\upm}{u^{\pm}}
\newcommand{\ppm}{^{\pm}}
\newcommand{\cN}{\mathcal{N}}
\newcommand{\cM}{\mathcal{M}}
\newcommand{\fa}{\mathfrak{a}}
\newcommand{\TT}{\mathbb{T}^2}
\newcommand{\iTT}{\int_{\mathbb{T}^2}}
\newcommand{\iT}{\int_{\mathbb{T}^2}}
\newcommand{\iOm}{\int_{\Omega}}
\newcommand{\init}{_{\mathrm{in}}}
\newcommand{\Tlam}{T_{\lambda}}
\newcommand{\Tslam}{T_{\sqrt{\lambda}}}
\begin{document}
\title[Nonlinear stability of entropy waves]{Nonlinear stability of entropy waves for the Euler equations}

\author[WANG]{Wei Wang}
\address{School of Mathematical Sciences,
Zhejiang University, Hangzhou, People’s Republic of China}
\email[W. Wang]{wangw07@zju.edu.cn}

\author[ZHANG]{Zhifei Zhang}
\address{School of Mathematical Sciences, Peking University, Beijing, People’s Republic of China}
\email[Z. Zhang]{zfzhang@math.pku.edu.cn}

\author[ZHAO]{Wenbin Zhao 				  \orcidlink{0000-0003-0804-1934}
	$^{\dagger}$}
\address{School of Mathematical Sciences, Peking University, Beijing, People’s Republic of China}
\email[W. Zhao]{wenbizhao2@pku.edu.cn}
\thanks{$\dagger$Corresponding author.}

\thanks{W. Wang is supported by  NSF of China (Nos.11931010, 12271476). Z. Zhang is supported by  NSF of China (Nos. 12171010, 12288101).  W. Zhao is supported by China Postdoctoral Science Foundation (Nos.2020TQ001, 2021M690225). }

\subjclass[2010]{35Q31, 35Q35, 35R35, 76B03, 76N10}

\keywords{compressible Euler, free boundary problems, contact discontinuity, entropy wave}

\date{\today}
%\dedicatory{}%
%\commby{}%

\begin{abstract}
In this article, we consider a class of the contact discontinuity for the full compressible Euler equations, namely the entropy wave, where the velocity is continuous across the interface while the density and the entropy can have jumps. 
The nonlinear stability of entropy waves is a longstanding open problem in multi-dimensional hyperbolic conservation laws. The rigorous treatments are challenging due to the characteristic discontinuity nature of the problem (G.-Q. Chen and Y.-G. Wang in \textit{Nonlinear partial differential equations}, Volume 7 of \textit{Abel Symp.}(2012)). In this article, we discover that the Taylor sign condition plays an essential role in the nonlinear stability of entropy waves. 
By deriving the evolution equation of the interface in the Eulerian coordinates, we relate the Taylor sign condition to the hyperbolicity of this evolution equation, which reveals a stability condition of the entropy wave. With the optimal regularity estimates of the interface, we can derive the a priori estimates  without loss of regularity.

\end{abstract}

\maketitle

%\tableofcontents

\section{Introduction}

\subsection{Presentation of the problem}

In this article, we consider the full compressible Euler equations:
\begin{equation}\label{eq: system: Euler: rho u S}
\begin{cases}
	\p_t\rho+\dvg (\rho u)=0, \\
	\p_t(\rho u)+\dvg(\rho u\ten u)+\nabla p=0, \\
	\p_t(\rho\frac{|u|^2}{2}+\rho e)
	+\dvg\BL u(\rho\frac{|u|^2}{2}+\rho e+p)\BR=0,
\end{cases}
\end{equation}
where $ \rho,\,u,\,p=p(\rho,\,S),\,e=e(\rho,\,S) $ are the density, velocity, pressure and internal energy with the entropy $S$, respectively. The pressure $ p=p(\rho,\,S) $ satisfies the following state equation
\begin{equation}\label{eq: state eq}
	p=p(\rho,\,S)=A\rho^{\gamma}\re^S,
\end{equation}
with $ A>0 $ and $ \ga>1 $ as constants.

For a piecewise smooth weak solution of \eqref{eq: system: Euler: p u S} in a domain $ \Om=\T^2\times(-1,\,1) $ with
\begin{equation*}
	\Ompm=\{x=(\bx,\,x_3)=(x_1,\,x_2,\,x_3)\in\Om:\,x_3\gtrless f(t,\,\bx)\}, 
\end{equation*}
the Rankine–Hugoniot (RH) conditions are satisfied on the interface $ \Ga_f=\{(\bx,\,f(t,\,\bx):\,\bx\in\TT)\} $:
\begin{equation}\label{eq: RH for full Euler}
    \begin{cases}
    \jump{m_N}=0, \\
    m_N\jump{u_N}+|N|^2\jump{p}=0, \\
    m_N\jump{u_{\tau}}=0,\\
    m_N\jump{\frac{|u|^2}{2}+e}+\jump{pu_N}=0,
    \end{cases}
\end{equation}
where
\begin{equation}\label{def: vectors}
    N=(-\p_1f,\,-\p_2f,\,1)\trans,
    \quadd
    \tau_1=(1,\,0,\,\p_1f)\trans,
    \quadd
    \tau_2=(0,\,1,\,\p_2f)\trans,
\end{equation}
\begin{equation}\label{def: uN utau}
    u_N=u\cdot N,
    \quadd
    u_{\tau}=(u\cdot\tau_1,\,u\cdot\tau_2)\trans,
    \quadd
    m_N=\rho(u_N-\p_tf).
\end{equation}
There are two kinds of characteristic discontinuities on which $\jump{p}=\jump{u_N}=m_N=0$ (see \cite{MR2284507,MR1942469,MR3289359,MR3468916}):
\begin{itemize}
\item Vortex sheets:
\begin{equation*}
    \jump{u_{\tau}}\neq0;
\end{equation*}

\item Entropy waves:
\begin{equation*}
    \jump{u_{\tau}}=0, \quadd \jump{\rho}\neq0,
    \quadd \jump{S}\neq 0.
\end{equation*}
\end{itemize}

In this article, we focus on the entropy waves. The system \eqref{eq: system: Euler: rho u S} can be rewritten as a symmetric hyperbolic system of $ (p\ppm,\,u\ppm,\,S\ppm) $ in $\Om\ppm$ respectively:
\begin{equation}\label{eq: system: Euler: p u S}
\begin{cases}
	\frac{1}{\ga p\ppm}D_t\ppm p\ppm+\dvg u\ppm=0, \\
	\rho\ppm D_t\ppm u\ppm+\nabla p\ppm=0, \\
	D_t\ppm S\ppm=0,
\end{cases}
\end{equation}
where $D_t\ppm=\p_t+u\ppm\cdn$. Since $ \jump{u}=0 $, we shall just use $ D_t=\p_t+u\cdn $ to denote the material derivative. 
The density $ \rho\ppm $ can be recovered from \eqref{eq: state eq} by
\begin{equation}\label{eq: rho by p and S}
	\rho\ppm=A^{-\frac{1}{\ga}}(p\ppm)^{\frac{1}{\ga}}\re^{-\frac{S\ppm}{\ga}}.
\end{equation}
On the interface $ \Ga_f $, the RH conditions across $ \Ga_f $ are
\begin{equation}\label{eq: RH condition}
	\jump{p}=p^+-p^-=0, \quadd
	\jump{u}=u^+-u^-=0, \quadd
	\text{on $ \Ga_f $}.
\end{equation}
Meanwhile, the density and the entropy can have jumps across $ \Ga_f $. We shall assume that
\begin{equation}\label{eq: jump}
	\jump{\rho}=\rho^+-\rho^-\neq0,
	\quadd
	\jump{S}=S^+-S^-\neq0, \quadd
	\text{on $ \Ga_f $}.
\end{equation}

The evolution of the interface $ \Ga_f $ is given by
\begin{equation}\label{eq: f}
	\p_tf=\upm\cdot N.
\end{equation}
We shall also use $ N\ppm=\mp N $ to indicate outer normal directions of $\Ga_f$ in $ \Om\ppm $ respectively.

On the fixed upper and lower boundaries $ \Ga\ppm=\T^2\times\{\pm1\} $, there holds that
\begin{equation}\label{eq: bdry: Ga pm}
	\upm\cdot n\ppm\big|_{\Ga\ppm}=0,
\end{equation}
with $ n\ppm=(0,\,0,\,\pm1)\trans $.

In this article, we shall prove the  a priori estimates of the problem \eqref{eq: system: Euler: p u S}--\eqref{eq: bdry: Ga pm} without loss of regularity under the Taylor sign condition 
\begin{equation}\label{eq: Taylor}
	\jump{\nabla_Np}
	=\nabla_Np^+-\nabla_N p^-
	=-\nabla_{N^+}p^+-\nabla_{N^-}p^-
	>0.
\end{equation}
More precise statement of the main result will be presented in Section \ref{sec: reformulaton}. 
%In Appendix \ref{sec: incom}, we also show that the a priori estimates of the problem hold true for the incompressible Euler equations.

\subsection{History and related works}

There are three fundamental waves in the multi-dimensional hyperbolic conservation laws: shock waves, rarefaction waves and contact discontinuities (including vortex sheets and entropy waves). The interested reader is referred to \cite{MR2284507, MR3468916} for detailed discussion. The nonlinear stability of shock waves and rarefaction waves were proved in \cite{MR699241, MR683422} and \cite{MR976971, MR997387} respectively. As for contact discontinuities, they are characteristic discontinuities and usually subject to the Kelvin–Helmholtz instability and the Rayleigh-Taylor instability (see \cite{MR947785, MR2963789}). If $ \jump{u}\neq 0 $,  the contact discontinuity is also called the vortex sheet. The 3D vortex sheets are violently unstable while the 2D vortex sheets are weakly stable under the supersonic condition (see \cite{MR2159807, MR154509, MR97930}). The nonlinear stability of the 2D vortex sheets were proved in \cite{MR2095445, MR2423311} (see also \cite{MR2441089, MR3912754}). 

If $ \jump{u}=0 $ where the velocity is continuous across the interface, the contact discontinuity is also called the entropy wave. 
The normal modes analysis shows that the entropy wave is only weakly stable (see \cite{MR1942469}). Recently in \cite{MR4268832}, the authors proved the stability of the entropy wave with constant states by vanishing viscosity. However, the nonlinear stability of the general entropy waves in multi-dimensional situations is a longstanding open problem (see \cite{MR3289359}). There even lacks stability conditions addressing this problem. As stated in \cite{MR3289359}, \textit{``it would be interesting to analyze entropy waves to explore new phenomena and features of these waves in
two-dimensions and even higher dimensions."} 

 In this article, we discover that the Taylor sign condition is essential to the nonlinear stability of entropy waves and prove the a priori estimates of the problem without loss of regularity. More precisely: 
\begin{itemize}
	\item We shall derive the evolution equation of the interface and study the problem in the Eulerian coordinates. This approach was first used in \cite{MR3745155} to investigate the stability of the incompressible current-vortex sheets. It was generalized by the authors to the one-phase compressible Euler equations in \cite{Wang-Zhang-Zhao-Mach}, where it is vacuum in $ \Om^+ $. The entropy wave can be seen as a two-phase problem, where we have two fluids $ (p\ppm,\,u\ppm,\,S\ppm) $ in $ \Om\ppm $ respectively.
	
	\item  With the evolution equation of the interface, we discover that the Taylor sign condition is a natural stability condition since it is equivalent to the hyperbolicity of this evolution equation. Then we can derive the optimal regularity estimates of the interface. This enables us to investigate the quantities inside $ \Om\ppm $ in a simpler way. We do not need to make a change of coordinates or use the Alinhac's good unknowns. The a priori estimates can be derived without loss of regularity. This is important since loss of regularity is a common phenomenon for characteristic discontinuities (see \cite{MR2284507}).

	\item The Taylor sign condition is commonly used when treating the free boundary problems. However, for the two-phase compressible flow without gravity or other forces, it is a strong requirement that the Taylor sign condition \eqref{eq: Taylor} holds at each point on the interface. On the other hand, the violation of the Taylor sign condition will lead to the Rayleigh-Taylor instability (see \cite{MR2963789}).

	\item In order for the piecewise smooth solution to be the entropy wave defined in \eqref{eq: RH condition}--\eqref{eq: jump}, there need some high order compatibility conditions \eqref{eq: compatibility} on the interface. As discussed in Remark \ref{rem: compatibility}, the violation of the compatibility conditions could transform the entropy wave to a vortex sheet and lead to the Kelvin–Helmholtz instability. This suggests that the entropy wave is a really special class of contact discontinuities of the compressible Euler equations. See also the discussion of viscous contact discontinuities in \cite{MR2208289,MR2450610}. 
	
%	\item When considering the incompressible Euler equations, the Taylor sign condition also indicates the nonlinear stability of the interface as long as $\jump{u}=0$ where the Kevin-Helmholtz instability is ruled out. In this case, we are able to give a construction of the solution.
	
\end{itemize}

There is also a huge literature investigating other stabilizing effects on the interface. When taking magnetic fields under consideration, there are more types of characteristic discontinuities (see \cite{MR1942469,MR0121049}). If the magnetic fields are parallel to the interface, the characteristic discontinuities are called current-vortex sheets. The stability of the current-vortex sheets were investigated in \cite{MR2372810, MR2187618, MR2481071, MR3035981} for the compressible case and in \cite{MR2892470, MR4342131, MR2440346, MR3745155, MR2134458, Zhao-2023} for the incompressible case. If the magnetic fields are continuous across the interface and not parallel to the interface, the characteristic discontinuities are called MHD contact discontinuities. In \cite{MR3306348, MR3766987} Morando et al proved the nonlinear stability assuming that the Taylor sign condition holds. See also \cite{MR4367917} for the case with surface tension. Recently, Wang and Xin in \cite{Wang-Xin-2022} managed to prove the nonlinear stability without the assumption of the Taylor sign condition. They used the Lagrangian coordinates and verified that the normal component of the magnetic field can stabilize the interface (see also \cite{MR4340933} for the incompressible case with surface tension).

The rest of the article is organized as follows. After laying out some preliminaries in Section \ref{sec: preliminaries}, we shall derive the evolution equation of the interface and present the main result in Section \ref{sec: reformulaton}. We prove some basic estimates in Section \ref{sec: basic}. The evolution of the interface is estimated in Section \ref{sec: f}. The estimates of the pressure, the velocity and the entropy are derived in Section \ref{sec: full p u}. 
%In Appendix \ref{sec: incom}, we give a discussion of corresponding problem in the incompressible Euler equations. 
In Appendix \ref{sec: elliptic},  some results on the elliptic systems are presented. We also list some analytic tools in Appendix \ref{sec: paradiff}.

\section{Preliminaries}\label{sec: preliminaries}

In this section, we recall some preliminary results on the harmonic coordinates and the Dirichlet-Neumann (DN) operators from \cite{MR3745155, Wang-Zhang-Zhao-Mach}. The notations and basic properties of paradifferential operators are included in Appendix \ref{sec: paradiff}.

\subsection{Harmonic coordinates}
Given a smooth function $ f_{\ast}=f_{\ast}(\bx) $, we define a reference domain
\begin{equation*}
	\Om_{\ast}\ppm=\{x\in\Om:\,x_3\gtrless f_{\ast}(\bx)\},
	\quadd
	\Ga_*=\{(\bx,\,f_*(\bx)):\,\bx\in\TT\}.
\end{equation*}

We shall consider the free boundary problem that lies in a neighborhood of the reference domain $ \Om_{\ast} $. To this end, we define
\begin{equation*}
	\mathcal{N}(\delta,\,\ka)
	=\{f\in H^{\ka}(\TT):\,\norm{f-f_{\ast}}_{H^{\ka}(\TT)}\leq\delta\}.
\end{equation*}
For a function $ f\in\mathcal{N}(\delta,\,\ka) $, set
\begin{equation*}
	\Om_f\ppm=\{x\in\Om:\,x_3\gtrless f(\bx )\},
	\quadd
	\Ga_f=\{(\bx,\,f(\bx )):\,\bx\in\TT\}.
\end{equation*}
Then we can introduce the harmonic coordinates. Define $ \Phi_f\ppm:\,\Om_{\ast}\ppm\to\Om_f\ppm $ by the harmonic extension:
\begin{equation}\label{def: harmonic coordinates}
\begin{cases}
	\Lap\Phi_f\ppm=0, & \text{in $ \Om_{\ast}\ppm $}, \\
	\Phi_f\ppm(\bx ,\,f_{\ast}(\bx))=(\bx ,\,f(\bx )), & \text{on $ \Ga_{\ast} $}, \\
	\Phi_f\ppm(\bx ,\,\pm1)=(\bx ,\,\pm1), & \text{on $ \Ga\ppm $}.
\end{cases}
\end{equation}
Given $ f_{\ast} $, there exists $ \delta_0>0 $ such that $ \Phi_f\ppm $ is bijective when $ \delta\in[0,\,\delta_0] $. Thus we can also define the inverse map $ (\Phi_f\ppm)^{-1}:\,\Om_f\ppm\to\Om_{\ast}\ppm $ such that
\begin{equation*}
	\Phi_f\ppm\circ(\Phi_f^{\pm})^{-1}=\mathrm{Id},
	\quadd
	(\Phi_f^{\pm})^{-1}\circ\Phi_f\ppm=\mathrm{Id}.
\end{equation*}

Let us list some basic inequalities about harmonic coordinates without proof.
\begin{lem}\label{lem: harmonic coord.}
	Assume that $ f\in\mathcal{N}(\delta_0,\,\ka) $ with $ \ka\geq 4 $. Then there exists a constant $ C=C(\delta_0,\,\norm{f_{\ast}}_{H^{\ka}(\TT)}) $ such that
	\begin{enumerate}
		\item If $ u\in H^{s}(\Om_f\ppm) $ with $ s\in[0,\,\ka] $, then
		\begin{equation*}
			\norm{u\circ\Phi_f\ppm}_{H^{s}(\Om_{\ast}\ppm)}
			\leq C\norm{u}_{H^s(\Om_{f}\ppm)}.
		\end{equation*}
	
		\item If $ u\in H^{s}(\Om_{\ast}\ppm) $ with $ s\in[0,\,\ka] $, then
		\begin{equation*}
			\norm{u\circ(\Phi_f^{\pm})^{-1}}_{H^{s}(\Om_{f}\ppm)}
			\leq C\norm{u}_{H^s(\Om_{\ast}\ppm)}.
		\end{equation*}
	
		\item If $ u,\,v\in H^{s}(\Om_f\ppm) $ with $ s\in[2,\,\ka] $, then
		\begin{equation*}
			\norm{uv}_{H^{s}(\Om_f\ppm)}
			\leq C\norm{u}_{H^{s}(\Om_f\ppm)}
			\norm{v}_{H^{s}(\Om_f\ppm)}.
		\end{equation*}
	\end{enumerate}
	
\end{lem}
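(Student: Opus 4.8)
The plan is to reduce the three inequalities to one quantitative regularity bound for the coordinate maps $\Phi_f\ppm,(\Phi_f^{\pm})^{-1}$, and then to deduce (1) and (2) as composition estimates and (3) as an algebra estimate. The four ingredients are: (a) an elliptic estimate showing $\Phi_f\ppm$ is close to the identity in a high Sobolev norm, hence a bi-Lipschitz diffeomorphism; (b) Sobolev regularity of the inverse map; (c) a Fa\`a di Bruno / Gagliardo--Nirenberg argument for the composition operator; and (d) the Banach algebra property of $H^{s}(\R^3)$ for $s>\tfrac32$.

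For (a): since the first two components of the boundary data in \eqref{def: harmonic coordinates} coincide with $x_1,x_2$ on all of $\p\Om_{\ast}\ppm$, harmonicity forces $\Phi_f\ppm(x)=(x_1,x_2,\varphi_f\ppm(x))$ with $\varphi_f\ppm$ harmonic in $\Om_{\ast}\ppm$, equal to $f$ on $\Ga_{\ast}$ (read through the fixed smooth parametrisation $\bx\mapsto(\bx,f_{\ast}(\bx))$) and to $\pm1$ on $\Ga\ppm$. As $\Ga_{\ast}$ and $\Ga\ppm$ are disjoint smooth hypersurfaces, elliptic regularity for the Dirichlet problem, with its gain of half a derivative, gives $\norm{\Phi_f\ppm-\mathrm{Id}}_{H^{\ka+\frac12}(\Om_{\ast}\ppm)}\le C\norm{f-f_{\ast}}_{H^{\ka}(\TT)}\le C\delta_0$, where I used that the extension is affine in $f$ with $\Phi_{f_{\ast}}\ppm=\mathrm{Id}$, the constant depending only on $\delta_0$ and $\norm{f_{\ast}}_{H^{\ka}(\TT)}$ (recall $\norm{f}_{H^{\ka}(\TT)}\le\norm{f_{\ast}}_{H^{\ka}(\TT)}+\delta_0$ on $\cN(\delta_0,\ka)$). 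By the Sobolev embedding $H^{\ka+\frac12}(\Om_{\ast}\ppm)\hookrightarrow C^{2,\alpha}$ (here $\ka\ge4$ enters), $\Phi_f\ppm$ is $C^{2,\alpha}$ with $\norm{\Phi_f\ppm-\mathrm{Id}}_{C^{1,\alpha}}\le C\delta_0$, so for $\delta_0$ small its Jacobian determinant $\p_3\varphi_f\ppm$ is $\ge\tfrac12$ and $\Phi_f\ppm$ is a bi-Lipschitz $C^{2,\alpha}$-diffeomorphism of $\Om_{\ast}\ppm$ onto $\Om_f\ppm$, consistent with the bijectivity stated after \eqref{def: harmonic coordinates}. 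For (b): the $C^{2,\alpha}$ inverse function theorem gives $(\Phi_f^{\pm})^{-1}\in C^{2,\alpha}$, and a standard bootstrap on $D(\Phi_f^{\pm})^{-1}=\big(D\Phi_f\ppm\circ(\Phi_f^{\pm})^{-1}\big)^{-1}$ — using analyticity of $A\mapsto A^{-1}$ near the identity and the tame composition and product estimates, raising the regularity index stepwise — promotes this to $(\Phi_f^{\pm})^{-1}\in H^{\ka+\frac12}(\Om_f\ppm)$, with a constant of the same type.

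With (a)--(b) in hand, (1) and (2) are composition estimates. For $s=0$ they are the change-of-variables formula, the Jacobians being bounded above and below. For integer $s\in[1,\ka]$ I expand $\nabla^{s}(u\circ\Phi)$ by the Fa\`a di Bruno formula into a sum of terms $(\nabla^{k}u)\circ\Phi\cdot\prod_{i=1}^{k}\nabla^{m_i}\Phi$ (with $m_i\ge1$, $\sum_i m_i=s$), bound $(\nabla^{k}u)\circ\Phi$ in $L^{p}$ by change of variables and $\prod_i\nabla^{m_i}\Phi$ in $L^{q}$ by H\"older and Gagliardo--Nirenberg, and verify $\tfrac1p+\tfrac1q\le\tfrac12$ in each term: the two extreme ones — the factor $\nabla^{s}\Phi$ paired with $\nabla u$, and $(\nabla\Phi)^{s}$ paired with $\nabla^{s}u$ — close using the three-dimensional embeddings $\nabla\Phi\in H^{\ka-\frac12}\hookrightarrow L^{\infty}$, $\nabla^{\ka}\Phi\in H^{1/2}\hookrightarrow L^{3}$ and $\nabla u\in H^{\ka-1}\hookrightarrow L^{\infty}$, and the rest by interpolation; fractional $s\in[0,\ka]$ then follows by interpolating the operator $u\mapsto u\circ\Phi$ across the integer scale. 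For (1) only the estimate above and the classical Gagliardo--Nirenberg and product inequalities on the \emph{smooth} domain $\Om_{\ast}\ppm$ are needed; for (2) the same computation runs on $\Om_f\ppm$ and requires those inequalities on the Lipschitz domain $\Om_f\ppm$, which I get by composing with a bounded extension operator $E_f\colon H^{s}(\Om_f\ppm)\to H^{s}(\R^3)$, $0\le s\le\ka$, whose norm is controlled by $\norm{\nabla f}_{L^{\infty}}\les\norm{f}_{H^{\ka}(\TT)}$ alone, hence uniform on $\cN(\delta_0,\ka)$, together with the $H^{\ka+\frac12}$-regularity of $(\Phi_f^{\pm})^{-1}$.

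For (3) I use that $H^{s}(\R^3)$ is a Banach algebra for $s>\tfrac32$, which covers $s\ge2$: with the extension operator above,
\[
\norm{uv}_{H^{s}(\Om_f\ppm)}\le\norm{(E_fu)(E_fv)}_{H^{s}(\R^3)}\les\norm{E_fu}_{H^{s}(\R^3)}\norm{E_fv}_{H^{s}(\R^3)}\les\norm{u}_{H^{s}(\Om_f\ppm)}\norm{v}_{H^{s}(\Om_f\ppm)};
\]
alternatively one transports to $\Om_{\ast}\ppm$ by (1)--(2) and invokes the classical algebra property on the smooth domain. The main obstacles are the uniformity of the constants — that the elliptic estimate depends only on $\delta_0$ and $\norm{f_{\ast}}_{H^{\ka}(\TT)}$, not on $f$, and that the domains $\Om_f\ppm$ carry extension operators of uniformly bounded norm — and the exponent bookkeeping in the Fa\`a di Bruno / Gagliardo--Nirenberg step; the mild circular dependence of (2) on the regularity of $(\Phi_f^{\pm})^{-1}$ is resolved, as usual, by inducting on the regularity index starting from the $C^{2,\alpha}$ bound.
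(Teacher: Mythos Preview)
The paper does not prove this lemma: it is introduced with ``Let us list some basic inequalities about harmonic coordinates without proof,'' the result being a preliminary recalled from \cite{MR3745155} and \cite{Wang-Zhang-Zhao-Mach}. Your outline is the standard argument and, in substance, the one carried out in those references: reduce to the scalar harmonic function $\varphi_f\ppm-x_3$ with Dirichlet data $f-f_\ast$, use elliptic regularity (gaining half a derivative from boundary to interior) together with the Sobolev embedding available for $\ka\ge4$ to obtain a $C^{2}$ bi-Lipschitz diffeomorphism with Jacobian bounded below, run Fa\`a di Bruno plus Gagliardo--Nirenberg interpolation for the composition bounds (1)--(2), bootstrap the regularity of the inverse from the $C^{2,\alpha}$ base, and invoke the $H^s$-algebra property for $s>3/2$ via a Stein extension whose norm is controlled by $\norm{\nabla f}_{L^\infty}\les\norm{f}_{H^\ka}$ for (3). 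There is nothing in the paper itself to compare against; your sketch is correct and is exactly the expected route.
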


\subsection{The Dirichlet-Neumann operator}

For a smooth enough function $ g=g(\bx ) $ on $ \Ga_f=\{(\bx ,\,f(\bx )):\,\bx \in\T^2\} $, denote the harmonic extension of $ g $ to $ \Om_f\ppm $ by $ \Hf_f\ppm g $, that is,
\begin{equation}\label{def: harmonic extension}
\begin{cases}
	\Lap\Hf_f\ppm g=0,
		& \text{in $ \Om_f\ppm $}, \\
	(\Hf_f\ppm g)(\bx ,\,f(\bx ))=g(\bx ),
		& \text{on $ \Ga_f $}, \\
	(\Hf_f\ppm g)(\bx,\,\pm 1)=0,
		& \text{on $ \Ga\ppm $}.
\end{cases}
\end{equation}
Here we use the Dirichlet boundary condition on the bottom $ \Ga\ppm $ instead of the Neumann boundary condition as in the usual case. This modification is useful in the energy estimates in the following sections.

For a smooth enough function $ g=g(\bx) $ on $ \Ga_f=\{(\bx,\,f(\bx)):\,\bx\in\TT \} $, define
\begin{equation}\label{def: DN op}
	\Gf_f\ppm g
	=N\ppm\cdn\Hf_f\ppm g\big|_{\Ga_f}
	=\mp N\cdn\Hf_f\ppm g\big|_{\Ga_f},
\end{equation}
where $ N=(-\p_1f,\,-\p_2f,\,1)\trans $ is the scaled normal vector on the surface $ \Ga_f $. However, all the regularity properties of the Dirichlet-Neumann operator will be kept in spite of the modification, as discussed in the appendix of \cite{Wang-Zhang-Zhao-Mach}. The same arguments in \cite{MR3060183} yield the following basic properties of the DN operator.
\begin{lem}\label{lem: DN op}
	Let $ f\in\mathcal{N}(\delta_0,\,\ka) $ with $ \ka\geq 4 $. Then there exists a constant $ C=C(\delta_0,\,\norm{f}_{H^{\ka}(\TT)}) $ such that
	\begin{enumerate}
		\item $ \Gf_f\ppm $ is self-adjoint:
		\begin{equation*}
			(\Gf_f\ppm\phi,\,\psi)=(\phi,\,\Gf_f\ppm\psi),
			\quadd
			\forall\phi,\,\psi\in H^{\frac{1}{2}}(\TT).
		\end{equation*}
		
		\item $ \Gf_f\ppm $ is positive:
		\begin{equation*}
			(\Gf_f\ppm\phi,\,\phi)
			\geq C\norm{\phi}_{\dot{H}^{\frac{1}{2}}(\TT)}.
		\end{equation*}

	\end{enumerate}
\end{lem}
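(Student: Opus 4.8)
The statement to prove is Lemma~\ref{lem: DN op}, asserting self-adjointness and positivity of the Dirichlet-Neumann operators $\Gf_f^{\pm}$ associated to the harmonic extension with mixed Dirichlet data (data $g$ on $\Ga_f$, zero on $\Ga^{\pm}$).

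The plan is to reduce both claims to Green's identity on the fixed-boundary domains $\Om_f^{\pm}$. First I would fix the sign conventions: $N^+ = -N$ is the outward normal of $\Om_f^+$ along $\Ga_f$ (pointing into $x_3 < f$), while on $\Ga^+ = \T^2 \times \{+1\}$ the outward normal of $\Om_f^+$ is $n^+ = (0,0,1)^{\top}$; symmetrically for $\Om_f^-$. Since $\Hf_f^{\pm}g$ vanishes on $\Ga^{\pm}$ by construction, the boundary terms on the fixed top and bottom plates will drop out, and the only surviving boundary is $\Ga_f$.

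\textbf{Self-adjointness.} Given $\phi,\psi \in H^{1/2}(\TT)$, let $u = \Hf_f^{+}\phi$ and $v = \Hf_f^{+}\psi$, both harmonic in $\Om_f^+$ with the prescribed traces. Green's identity gives
\begin{equation*}
	\int_{\Om_f^+} \nabla u \cdot \nabla v \, dx
	= \int_{\p\Om_f^+} u \, (\p_{\nu} v) \, dS,
\end{equation*}
where we used $\Lap v = 0$; here $\nu$ is the unit outward normal and $dS$ the induced surface measure. On $\Ga^+$ the integrand vanishes since $u|_{\Ga^+} = 0$. On $\Ga_f$, parametrizing by $\bx \in \TT$, the surface measure is $|N|\,d\bx$ and the unit outward normal is $N^+/|N|$, so $(\p_{\nu}v)\,dS = (N^+ \cdn v)\,d\bx = (\Gf_f^+\psi)\,d\bx$, and $u|_{\Ga_f} = \phi$. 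Hence $\int_{\Om_f^+}\nabla u\cdot\nabla v\,dx = (\phi, \Gf_f^+\psi)$, and the left side is symmetric in $u,v$, i.e. in $\phi,\psi$, giving $(\Gf_f^+\phi,\psi) = (\phi,\Gf_f^+\psi)$. The argument for $\Gf_f^-$ is identical with $\Om_f^-$, $N^- = N$, $n^- = (0,0,-1)^{\top}$.

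\textbf{Positivity.} Taking $\psi = \phi$ above yields $(\Gf_f^+\phi,\phi) = \int_{\Om_f^+} |\nabla \Hf_f^+\phi|^2\,dx \geq 0$. To upgrade this to the quantitative bound $(\Gf_f^+\phi,\phi) \geq C\norm{\phi}_{\dot H^{1/2}(\TT)}$ (read as $\norm{\phi}_{\dot H^{1/2}}^2$, as is standard for the DN operator), I would invoke the trace theorem and the variational characterization of the harmonic extension: among all $H^1(\Om_f^+)$ functions with trace $\phi$ on $\Ga_f$ and zero on $\Ga^+$, the harmonic extension minimizes the Dirichlet energy, and the standard surjectivity/continuity of the trace operator $H^1(\Om_f^+) \to \dot H^{1/2}(\Ga_f)$ together with a Poincaré-type inequality (available because of the homogeneous Dirichlet condition on $\Ga^+$, which removes the constants) gives $\int_{\Om_f^+}|\nabla\Hf_f^+\phi|^2 \geq C\norm{\phi}_{\dot H^{1/2}(\Ga_f)}^2$. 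Finally, transporting the $\dot H^{1/2}(\Ga_f)$ norm to $\dot H^{1/2}(\TT)$ costs only a constant depending on $\norm{f}_{H^{\ka}(\TT)}$ via the graph parametrization, which is admissible since $\ka \geq 4$ controls the Lipschitz (indeed $C^{1,\alpha}$) norm of $f$. The constant $C$ in Lemma~\ref{lem: harmonic coord.} and the harmonic-coordinate change of variables can be used to make the dependence on $(\delta_0, \norm{f}_{H^{\ka}})$ explicit. The main obstacle is purely bookkeeping: getting the normal-vector and surface-measure conventions consistent so that the unnormalized $N^{\pm}$ in the definition \eqref{def: DN op} matches the geometric $\p_{\nu}\,dS$, and confirming that the Poincaré inequality is uniform over $f \in \cN(\delta_0,\ka)$ — both of which follow from the quantitative bounds already recorded in the preliminaries rather than any new idea.
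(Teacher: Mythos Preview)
Your proof is correct. The paper itself does not give a proof of this lemma: it simply states that ``the same arguments in \cite{MR3060183} yield the following basic properties of the DN operator,'' deferring entirely to Lannes' monograph. Your Green's-identity argument for self-adjointness and the variational/trace-theorem argument for positivity are exactly the standard proofs one finds in that reference, so there is nothing to compare.
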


By the appendix of \cite{MR3745155}, we also have the following paralinearization of the DN operators.
\begin{lem}
	Assume that $ f\in H^{\ka}(\TT) $ with $ \ka\geq4 $. 
	Then the DN operators $ \Gf_f\ppm $ can be decomposed as
	\begin{equation}\label{eq: decom of Gf}
		\Gf_f\ppm=T_{\lam}+R_f\ppm,
	\end{equation}
	where the symbol of the leading term is
	\begin{equation}\label{def: lambda}
		\lam(x,\,\xi)=\sqrt{(1+|\nabla f|^2)|\xi|^2
		-(\nabla f\cdot \xi)^2}
	\end{equation}
	and the remainder terms $ R_f\ppm $ satisfy that
	\begin{equation}\label{estim: Rf}
		\norm{R_f\ppm g}_{H^{s}}
		\leq C\big(\norm{f}_{H^{\ka}}\big)
		\norm{g}_{H^s},
		\quadd\forall s\in[1/2,\,\ka-1].
	\end{equation}
	Furthermore, there holds that
	\begin{equation}\label{estim: Gf}
		\norm{\Gf_f\ppm g}_{H^{s-1}}
		\leq C\big(\norm{f}_{H^{\ka}}\big)
		\norm{g}_{H^s},
		\quadd\forall s\in[1/2,\,\ka].
	\end{equation}
\end{lem}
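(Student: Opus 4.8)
The plan is to run the standard paralinearization procedure for Dirichlet--Neumann operators (as in \cite{MR3060183,MR3745155}), the only genuinely new features being that the problem is two-sided and that we impose the Dirichlet condition on the flat parts $\Ga\ppm$ rather than the Neumann one. We treat the $+$ sign; the $-$ sign is identical up to orientation.

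First I would straighten the interface. Using a regularizing diffeomorphism $(\bx,z)\mapsto(\bx,\varrho(\bx,z))$ from the slab $\TT\times(0,1)$ onto $\Om_f^+$ with $\varrho(\bx,0)=f(\bx)$, $\varrho(\bx,1)=1$, and $\varrho$ as smooth in $z$ as one wishes and no less regular than $f$ in $\bx$, the harmonic function $u=\Hf_f^+g$ transforms into a solution $v$ of a uniformly elliptic equation
\[
\p_z^2v+\al\Lap_{\bx}v+\be\cdot\nabla_{\bx}\p_zv+\ga\p_zv=0,\quad v|_{z=0}=g,\quad v|_{z=1}=0,
\]
whose coefficients $\al,\be,\ga$ are polynomial in $\nabla\varrho$ and hence smooth in $z$ with values in $H^{\ka-1}(\TT)$; since $\ka\ge4$ we have $\nabla f\in H^{\ka-1}(\TT)\hookrightarrow W^{1,\infty}(\TT)$, which is exactly the regularity needed to carry out the paradifferential calculus of Appendix~\ref{sec: paradiff} with a strictly positive number of spare derivatives. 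The DN operator is then recovered, modulo the metric factor, as a boundary trace of a first-order combination of $\p_zv$ and $\nabla_{\bx}v$, so matters reduce to identifying $\p_zv|_{z=0}$.

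Next I would factor the elliptic operator following Alazard--M\'etivier: write its symbol in the $\bx$ variables, with $z$ frozen, as $(\ri\tau-a(\bx,z,\xi))(\ri\tau-A(\bx,z,\xi))$, where $\tau$ is dual to $z$ and $A$ is the root with negative imaginary part, so that $\p_z-T_A$ is a forward-parabolic evolution towards $z=1$; a direct computation gives $\mathrm{Re}\,A|_{z=0}=-\lam(\bx,\xi)$ with $\lam$ the symbol in \eqref{def: lambda}. Using the equation for $v$, the paralinearization of the coefficients, and the symbolic calculus for products of paradifferential operators, one checks that $(\p_z-T_A)v$ solves a parabolic equation with an $L^2_zH^{s}$-source and vanishing trace at $z=1$, hence is smoothing by one derivative on the slab; evaluating at $z=0$ yields $\p_zv|_{z=0}=T_{A|_{z=0}}g$ plus a remainder bounded in $H^s$. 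Transferring back through the diffeomorphism and taking the appropriate real part produces the decomposition $\Gf_f^+=T_\lam+R_f^+$ together with \eqref{estim: Rf}; the admissible range $s\in[1/2,\ka-1]$ is dictated by how many derivatives the coefficients can absorb. Finally, \eqref{estim: Gf} follows since $T_\lam$ is bounded of order one from $H^s$ to $H^{s-1}$ and $R_f^\pm$ maps $H^s$ into $H^s\subset H^{s-1}$ (for $s=\ka$ one uses the order-one bound on $T_\lam$ directly, or, alternatively, derives \eqref{estim: Gf} from the elliptic energy estimate for $v$ plus a trace inequality).

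The main obstacle is entirely technical: bookkeeping the orders and the Sobolev thresholds of all the paradifferential remainders under the single hypothesis $f\in H^\ka$, so that the composition errors, the paralinearization of the nonlinear coefficients $\al,\be,\ga$, and the one-derivative smoothing of the parabolic evolution are simultaneously consistent on the whole interval $s\in[1/2,\ka-1]$. The Dirichlet condition on $\Ga\ppm$ only enters through the solution operator of the factored parabolic problem on a compact $z$-interval bounded away from $z=0$; it leaves every symbolic computation near the interface untouched. Consequently the argument of the appendix of \cite{MR3745155} applies after this cosmetic change, and I would only indicate the modifications, referring there for the remaining routine estimates.
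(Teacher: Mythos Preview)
Your proposal is correct and aligns with the paper's treatment: the paper does not supply its own proof of this lemma but simply attributes it to the appendix of \cite{MR3745155}, and your sketch is precisely a summary of that paralinearization argument (straightening, factoring the elliptic operator, parabolic smoothing for $\p_z-T_A$), with the correct observation that the Dirichlet condition on $\Ga\ppm$ only affects the analysis away from the interface and is cosmetic. If anything, you have written more than the paper does.
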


\subsection{Notations}

When there is no ambiguity, we shall use 
$ u(x)=\mathbf{1}_{\Om^+}(x)u^+(x)+\mathbf{1}_{\Om^-}(x)u^-(x) $ and $ \norm{u}_{H^s(\Om)}=\norm{u^+}_{H^s(\Om^+)}+\norm{u^-}_{H^s(\Om^-)} $ to simplify notations.

Since $ u $ is continuous across $ \Ga_f $ from \eqref{eq: RH condition}, we shall just use $ D_t=\p_t+u\cdn $ as the material derivative. The tangential derivatives in $\Om\ppm$ are
\begin{equation}\label{def: tangential derivatives}
\ba
	\bp_t = \p_t +\Hf_f\ppm(\p_t f)\p_3 , \quadd
	\bp_j =  \p_j +\Hf_f\ppm(\p_j f)\p_3 	\quad (j=1,\,2).
\ea
\end{equation}
By \eqref{eq: f}, it is direct to verify that
\begin{equation}\label{eq: Dt}
	D_t=\bp_t+u_1\bp_1+u_2\bp_2,
	\quadd \text{on $ \Ga_f $}.
\end{equation}
By the definition of the harmonic extension \eqref{def: harmonic extension}, the derivatives $ \bp=(\bp_1,\,\bp_2) $ are tangential to both $ \Ga_f $ and $ \Ga\ppm $. We denote by
\begin{equation}\label{def: Lam, Ums}
	\Lam=\sang{\nabla}=(1-\Lap)^{1/2},
	\quadd
	\Ups=\sang{\bp}=(1+|\bp|^2)^{1/2}
\end{equation}
when treating high order derivatives.

The minuscule indices $ i,\,j,\,k $ are in $ \{1,\,2\} $ and the capital indices $ J,\,K $ are in $ \{1,\,2,\,3\} $. We shall use the Einstein summation convention, that is, a repeated index in a term means summation of terms over the index. To simply the arguments, we also omit all the binomial coefficients $ \binom{l}{m}=\frac{l!}{m!(l-m)!} $. 
%The brackets $ \floor{s} $ denotes the largest integer less or equal to $ s\in\R $.

In the following, we shall use $ \Hf\ppm=\Hf_f\ppm $ and $ \Gf\ppm=\Gf_f\ppm $ if there is no confusion of the function $ f $.

\section{Reformulation and main result}\label{sec: reformulaton}

\subsection{Evolution of interface $ f $}\label{subsec: evollution of f}

By \eqref{eq: Dt}, we can rewrite the evolution equation of $ f $ in \eqref{eq: f} as
\begin{equation}\label{eq: f: Dt f}
	D_tf=u_3\ppm.
\end{equation}
By taking another derivative $ D_t $ to both sides of \eqref{eq: f: Dt f}, we have
\begin{equation}\label{eq: f: Dt2 f}
	D_t^2f=D_tu\ppm_3.
\end{equation}
For $ i=1,\,2 $, there holds that
\begin{equation}\label{eq: f: Dt2 bpi f}
\ba
	D_t^2\bp_if
	= & \bp_iD_tu_3\ppm+[D_t^2,\,\bp_i]f \\
	= & \bp_iD_tu_3\ppm
	+[D_t,\,\bp_i]D_tf+D_t[D_t,\,\bp_i]f \\
	= & \bp_iD_tu_3\ppm-\bp_iu_j\ppm\bp_jD_tf
	-D_t(\bp_iu_j\ppm \bp_jf) \\
	= & \bp_iD_tu_3\ppm
	-\bp_iu_j\ppm(D_t\bp_jf+\bp_ju_k\ppm\bp_kf) \\
	& -(\bp_i D_tu_j\ppm-\bp_iu_k\ppm\bp_ku_j\ppm)\bp_jf
	-\bp_iu_j\ppm D_t\bp_jf \\
	= & \bp_i D_tu\ppm\cdot N-2\bp_iu_j\ppm D_t\bp_jf.
\ea
\end{equation}
Furthermore, we can plug the momentum equation in \eqref{eq: system: Euler: p u S} to \eqref{eq: f: Dt2 bpi f} to get that
\begin{equation}\label{eq: f: Dt2 bpi f: p}
\ba
	\rho\ppm D_t^2\bp_if
	= & -\bp_i\nabla p\ppm\cdot N
	+\frac{\bp_i\rho\ppm}{\rho\ppm}\nabla_N p\ppm 
	-2\rho\ppm\bp_iu_j\ppm D_t\bp_jf \\
	= & -\BL\nabla\bp_ip\ppm-\p_3p\ppm \nabla\Hf\ppm(\bp_if)\BR\cdot N
	+\frac{\bp_i\rho\ppm}{\rho\ppm}\nabla_N p\ppm 
		-2\rho\ppm\bp_iu_j\ppm D_t\bp_jf \\
	= & \p_3p\ppm\nabla_N\Hf\ppm(\bp_if)
	-\nabla_N\bp_ip\ppm 
	+\frac{\bp_i\rho\ppm}{\rho\ppm}\nabla_N p\ppm 
		-2\rho\ppm\bp_iu_j\ppm D_t\bp_jf.
\ea	
\end{equation}
Thus, by taking a sum of \eqref{eq: f: Dt2 bpi f: p} with the positive index and the negative index respectively, we have 
\begin{equation}\label{eq: f: Dt2 bpi f: +-}
	\ba
		D_t^2\bp_if
		+\frac{1}{\rho^++\rho^-}\BL\p_3p^+\Gf^+
		-\p_3p^-\Gf^-\BR\bp_if
		=\frac{1}{\rho^++\rho^-}\BL\cM^++\cM^-\BR,
	\ea
\end{equation}
with the Dirichlet-Neumann operators $ \Gf\ppm $ defined in \eqref{def: DN op} and 
\begin{equation}\label{def: cM pm}
	\cM\ppm
	=-\nabla_N\bp_ip\ppm 
	+\frac{\bp_i\rho\ppm}{\rho\ppm}\nabla_N p\ppm 
	-2\rho\ppm\bp_iu_j\ppm D_t\bp_jf.
\end{equation}
Lastly, the decomposition of the DN operators $ \Gf\ppm=T_{\lam}+R\ppm $ in \eqref{eq: decom of Gf} can be applied to get that
\begin{equation}\label{eq: evolution: f}
	D_t^2\bp_if+\fa T_{\lam}\bp_if
	= \cN^++\cN^-,
\end{equation}
where the Taylor sign $ \fa $ is
\begin{equation}\label{def: fa}
	\fa=\frac{\p_3p^+-\p_3p^-}{\rho^++\rho^-},
\end{equation}
and the lower order terms are
\begin{equation}\label{def: cN}
\ba
	\cN\ppm= & 
	-\frac{1}{\rho^++\rho^-}\nabla_N\bp_ip\ppm 
	+\frac{\bp_i\rho\ppm}{(\rho^++\rho^-)\rho\ppm}
	\nabla_Np\ppm \\
	& -\frac{2\rho\ppm}{\rho^++\rho^-}\bp_iu_j\ppm D_t\bp_jf
	\mp\frac{\p_3p\ppm}{\rho^++\rho^-}R\ppm\bp_if.
\ea
\end{equation}
Since
\begin{equation*}
	\p_3=\frac{1}{1+|\nabla f|^2}\BL\nabla_N+\p_if\bp_i\BR,
\end{equation*}
the RH conditions \eqref{eq: RH condition} and the Taylor sign condition \eqref{eq: Taylor} imply that
\begin{equation}\label{eq: fa to taylor sign}
\ba
	\fa= & \frac{1}{(\rho^++\rho^-)(1+|\nabla f|^2)}
	\BL\jump{\nabla_N p}+\p_if\jump{\bp_i p}\BR \\
	= & \frac{\jump{\nabla_Np}}{(\rho^++\rho^-)(1+|\nabla f|^2)}>0.
\ea
\end{equation}

\subsection{Evolution of the vorticity $ \om\ppm $ and the entropy  $ S\ppm $}
By taking the curl of the both sides of the momentum equation in \eqref{eq: system: Euler: p u S}, we have the evolution equation of the vorticity $ \om\ppm $ as
\begin{equation}\label{eq: evolution: om}
	D_t\om\ppm=\om\ppm\cdn u\ppm
	-\om\ppm(\dvg u\ppm)
	-\nabla\frac{1}{\rho\ppm}\times\nabla p\ppm.
\end{equation}

The evolution of the entropy $ S\ppm $ is given by the entropy equation in \eqref{eq: system: Euler: p u S} as
\begin{equation}\label{eq: evolution: S}
	D_tS\ppm=0.
\end{equation}

\subsection{Evolution of the pressure $ p\ppm $}

To derive the evolution equation of the pressure $ p\ppm $, we take $ D_t\eqref{eq: system: Euler: p u S}_1-\dvg\BL\frac{1}{\rho\ppm}\times\eqref{eq: system: Euler: p u S}_2\BR $ to get that
\begin{equation}\label{eq: wave of p}
	D_t\Big(\frac{1}{\ga p\ppm}D_t p\ppm\Big)
	-\dvg\Big(\frac{1}{\rho\ppm}\nabla p\ppm\Big)
	=\tr(\nabla u\ppm)^2.
\end{equation}
This is a second order wave equation.

\subsection{Compatibility conditions}\label{subsec: p on Gaf}

Assume that $ \ka\geq4 $ is an integer. To estimate high order derivatives of the piecewise smooth weak solutions, we need the following compatibility conditions on the interface $ \Ga_f $:
\begin{equation}\label{eq: compatibility}
	\jump{D_t^l p}=0, 
	\quadd
	\jump{D_t^l u}=0,
	\quadd
	0\leq l\leq\ka+1.
\end{equation}

Next we discuss an important consequence of the compatibility conditions \eqref{eq: compatibility}. Assume that the solution $ (p\ppm,\,u\ppm,\,S\ppm)\in H^3(\Om\ppm)\subset C^1(\Om\ppm) $ and $ f\in H^3(\TT)\subset C^1(\TT) $. Recall the vectors $ \tau_1 $ and $ \tau_2 $ in \eqref{def: vectors} which are tangential to the interface $ \Ga_f $. The RH conditions \eqref{eq: RH condition} and the compatibility conditions \eqref{eq: compatibility} can be applied to the momentum equation in \eqref{eq: system: Euler: p u S} to get that
	\begin{equation}\label{eq: jump of momentum}
		\jump{\frac{\nabla p}{\rho}}
		=-\jump{D_t u}=0.
	\end{equation}
Furthermore, since $\jump{p}=0$ and
	\begin{equation}\label{eq: jump of tangential p}
		\jump{\tau_i\cdn p}=\tau_i\cdn\jump{p}=0,
		\quadd i=1,\,2,
	\end{equation}
by \eqref{eq: jump of momentum} there holds that
	\begin{equation}\label{eq: jump of momentum in tau}
		0=\tau_i\cdot\jump{\frac{\nabla p}{\rho}}
		=\jump{\frac{1}{\rho}}(\tau_i\cdn p),
		\quadd i=1,\,2.
	\end{equation}
That is, when $ \jump{\rho}\neq 0 $, by \eqref{eq: jump of momentum in tau} we must have 
	\begin{equation}\label{eq: bpi p =0}
		\bp_ip\big|_{\Ga_f}=\tau_i\cdn p\big|_{\Ga_f}=0 \quadd (i=1,\,2),
	\end{equation}
which implies that the pressure on the interface $ p|_{\Ga_f}=p(t,\bx,f(t,\,\bx)) $ is independent on the space variables $ \bx $. Thus, we may assume that
\begin{equation}\label{def: q}
		p\big|_{\Ga_f}=q(t),
		\quadd
		q(0)=0.
\end{equation}

\begin{rem}\label{rem: compatibility}
	The consequence \eqref{eq: bpi p =0} is necessary for the piecewise smooth weak solution to be an entropy wave. If \eqref{eq: bpi p =0} fails, the tangential component of the pressure force is not trivial. Since the densities have a jump $\jump{\rho}\neq0$, the accelerations on both sides must have a jump:
	\begin{equation*}
	    \jump{D_tu}\cdot\tau_j
	    =-\jump{\frac{\tau_j\cdn p}{\rho}}
	    =-\jump{\frac{1}{\rho}}(\tau_j\cdn p)
	    \neq 0.
	\end{equation*}
	The entropy wave could evolve to a vortex sheet immediately. 
\end{rem}

\subsection{Main result}

Assume that $ \ka\geq4 $ is an integer. The full energy norm is defined as
\begin{equation}\label{def: cE}
\ba
	\cE(t)= & \norm{f}_{H^{\ka}(\TT)}
	+\norm{D_t f}_{H^{\ka-\frac{1}{2}}(\TT)} \\
%	+\norm{D_t^2f}_{H^{\ka-\frac{3}{2}}(\TT)}
%	+\sum_{l=3}^{\ka+1}\norm{D_t^lf}_{H^{\ka+\frac{3}{2}-l}(\TT)} \\
	& +\norm{u}_{H^{\ka}(\Om)}
	+\norm{D_tu}_{H^{\ka-1}(\Om)}
	+\sum_{l=2}^{\ka+1}\norm{D_t^lu}_{H^{\ka+1-l}(\Om)} \\
	& +\norm{(p,\,S)}_{H^{\ka}(\Om)}
 	+\sum_{l=1}^{\ka+1}\norm{(D_t^lp,\,D_t^lS)}_{H^{\ka+1-l}(\Om)}.
\ea
\end{equation}
For some $ T>0 $, we shall denote by
\begin{equation*}
	M=\sup_{0<t<T}\cE(t), 
	\quadd
	M_0=\cE(0).
\end{equation*}
The lower order energy norm is
\begin{equation}\label{def: cF}
\ba
	\cF(t)= \norm{(p,\,u,\,S)}_{H^{\ka-1}(\Om)}
	+\sum_{l=1}^{\ka}\norm{(D_t^lp,\,D_t^lu,\,D_t^lS)}_{H^{\ka-l}(\Om)}.
\ea
\end{equation}

\begin{thm}\label{thm}
	Let $ \ka\geq4 $ be an integer. Suppose that the initial data $ (f\init,\,p\init\ppm,\,u\init\ppm,\,S\init\ppm) $ satisfy the bound $ \cE(0)=M_0<\infty $. Furthermore, assume that there are two constants $ 0<c_0<C_0 $  such that
	\begin{enumerate}
		\item $ c_0\leq\rho\init\ppm\leq C_0 $, \quad $ c_0\leq p\init\ppm\leq C_0 $;
		
		\item $ -1+c_0\leq f\init\leq 1-c_0 $;
		
		\item $ \frac{N\init}{|N\init|}\cdot\jump{\nabla p\init\ppm}\geq c_0 $.
	\end{enumerate}
	Then for $ T>0 $ small enough, the solution $ (f,\,p\ppm,\,u\ppm,\,S\ppm) $ to the problem \eqref{eq: system: Euler: p u S}--\eqref{eq: bdry: Ga pm} under the compatibility conditions \eqref{eq: compatibility} satisfies that
	\begin{enumerate}
	\item $ M\leq C(M_0,\,c_0,\,C_0)+TC(M,\,c_0,\,C_0) $;
	
	\item $ -1+\frac{c_0}{2}\leq f\leq 1-\frac{c_0}{2} $;
			
	\item $ \frac{N}{|N|}\cdot\jump{\nabla p\ppm}\geq \frac{c_0}{2} $.
	\end{enumerate}
	
\end{thm}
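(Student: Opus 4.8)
The plan is to close the a priori estimate by a bootstrap argument organized around the two hierarchies appearing in $\cE(t)$: the interface quantities $(f, D_t f)$ and the interior quantities $(p^\pm, u^\pm, S^\pm)$ together with all their material derivatives $D_t^l$. The starting point is the evolution equation \eqref{eq: evolution: f} for the tangential derivatives $\bp_i f$, which is a hyperbolic second-order equation $D_t^2 \bp_i f + \fa T_\lam \bp_i f = \cN^+ + \cN^-$. Because the Taylor sign $\fa$ is strictly positive by hypothesis (3) of the theorem and \eqref{eq: fa to taylor sign}, the operator $\fa T_\lam$ is, up to lower-order contributions, a positive self-adjoint operator of order one; this gives an energy identity controlling $\tfrac12\|D_t\bp_i f\|_{L^2}^2 + \tfrac12(\fa T_\lam \bp_i f, \bp_i f)$ and, after commuting with $\Ups^{\ka-1}$, the norms $\|f\|_{H^\ka(\TT)} + \|D_t f\|_{H^{\ka-1/2}(\TT)}$. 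The commutator terms and the right-hand side $\cN^\pm$ are controlled by traces on $\Ga_f$ of the interior quantities (e.g. $\nabla_N \bp_i p^\pm$, $\bp_i u_j^\pm$) using Lemma \ref{lem: harmonic coord.}, Lemma \ref{lem: DN op}, the DN paralinearization \eqref{eq: decom of Gf}--\eqref{estim: Gf}, and the trace theorem, which leads to a bound of the form $\cE \lesssim M_0 + T\, C(M)$ for the interface part, provided the interior part of $\cE$ is already controlled.

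For the interior, I would exploit the special structure forced by the compatibility conditions: by \eqref{eq: bpi p =0} the pressure on $\Ga_f$ depends only on $t$, so $p^\pm - q(t)$ vanishes on the interface and solves, via \eqref{eq: wave of p}, a second-order wave-type equation $D_t(\tfrac{1}{\ga p^\pm}D_t p^\pm) - \divg(\tfrac{1}{\rho^\pm}\nabla p^\pm) = \tr(\nabla u^\pm)^2$ with a homogeneous-type boundary behaviour. Differentiating this equation $l$ times in $D_t$ and performing standard weighted energy estimates (the weights $1/(\ga p^\pm)$ and $1/\rho^\pm$ being comparable to constants by hypothesis (1), propagated) yields control of $\sum_l \|D_t^l p^\pm\|_{H^{\ka+1-l}}$ together with $\|p^\pm\|_{H^\ka}$, where the elliptic gain in the spatial variable comes from the div-form operator and the elliptic estimates of Appendix \ref{sec: elliptic} applied on $\Om^\pm$ with the Dirichlet-type data on $\Ga_f$ coming from $q(t)$ and the Neumann-type data on $\Ga^\pm$ from \eqref{eq: bdry: Ga pm}. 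Then $u^\pm$ is recovered: the divergence $\divg u^\pm = -\tfrac{1}{\ga p^\pm}D_t p^\pm$ from \eqref{eq: system: Euler: p u S}$_1$, the curl $\om^\pm$ from transporting \eqref{eq: evolution: om}, the normal trace $u^\pm\cdot N = D_t f$ on $\Ga_f$ and $u^\pm\cdot n^\pm = 0$ on $\Ga^\pm$; an elliptic div-curl system then gives $\|u^\pm\|_{H^\ka}$ and the $D_t^l$ versions, while $S^\pm$ is simply transported by \eqref{eq: evolution: S} with no loss. Closing these estimates requires the material derivatives $D_t^l$ of all quantities, which is why the norm $\cE$ is built from $D_t$-hierarchies rather than spatial ones; the commutators $[D_t^l, \divg]$, $[D_t^l, \nabla]$ etc.\ produce only lower-order (in the $\cE$ count) terms times geometric factors depending on $f$ and $\nabla u$, all absorbed into $T\,C(M)$.

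The structural non-degeneracy conclusions (2) and (3) are obtained last, by integrating in time: $\|f(t) - f\init\|_{L^\infty} \lesssim \int_0^t \|D_t f\|_{L^\infty} \lesssim T\,C(M)$ gives (2) for $T$ small relative to $c_0$, and similarly $\fa$, $\nabla_N p^\pm$ and $\rho^\pm$ change by at most $T\,C(M)$ in $C^0$ since their $D_t$-derivatives are controlled by $\cE$; choosing $T$ small then preserves the open conditions $\fa \geq c_0/2$ etc., which is exactly what makes the hyperbolic energy estimate for \eqref{eq: evolution: f} legitimate throughout $[0,T]$ and closes the bootstrap. I expect the main obstacle to be the interface estimate at top order: one must show that \eqref{eq: evolution: f}, commuted to order $\ka$, does \emph{not} lose derivatives — the right-hand side $\cN^\pm$ contains $\nabla_N \bp_i p^\pm$, whose $H^{\ka-1}(\TT)$ trace norm is borderline with respect to $\|p^\pm\|_{H^\ka(\Om^\pm)}$, so one needs the sharp trace inequality together with the gain coming from the fact that $\bp_i p^\pm$ vanishes on $\Ga_f$ (consequence of \eqref{eq: bpi p =0}), turning $\nabla_N \bp_i p^\pm|_{\Ga_f}$ effectively into a full-gradient trace of a function with vanishing boundary value, hence controllable by the interior elliptic regularity without loss. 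Verifying that all such borderline couplings between the interface and interior hierarchies close simultaneously — i.e. that the linear system of estimates is triangular and contractive for small $T$ — is the technical heart of the argument.
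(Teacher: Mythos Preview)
Your proposal is correct and matches the paper's approach almost step for step: tangential $L^2$ energies for $(D_t^l p, D_t^l u)$ from the symmetric hyperbolic system, transport estimates for $\om$ and $S$, the hyperbolic energy identity for \eqref{eq: evolution: f} using $\fa>0$, then elliptic recovery of $D_t^l p$ by rewriting the wave equation as $\Lap D_t^l p=\cdots$ with the spatially-constant Dirichlet data $\p_t^l q(t)$ on $\Ga_f$ (induction in $l$ from $\ka+1$ downwards), and finally div--curl recovery of $u$; your diagnosis of the borderline term $\nabla_N\bp_i p^\pm$ and its resolution via $\bp_i p^\pm|_{\Ga_f}=0$ plus interior elliptic regularity is exactly the paper's mechanism. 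The only slip is the commutation index: one applies $\Ups^{\ka-3/2}$ (not $\Ups^{\ka-1}$) to $\bp_i f$, so that the resulting energy $\|D_tF\|_{L^2}^2+\|\fa^{1/2}T_{\sqrt{\lam}}F\|_{L^2}^2$ lands precisely on $\|D_tf\|_{H^{\ka-1/2}}^2+\|f\|_{H^{\ka}}^2$, and correspondingly $\nabla_N\bp_i p^\pm$ is needed only in $H^{\ka-3/2}(\Ga_f)$.
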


The constants $ C(M_0,\,c_0,\,C_0) $ are continuous functions of $ M_0,\,c_0,\,C_0 $. In the energy estimates in the following sections, we shall take $ c_0 $ and $ C_0 $ to be fixed and just use $ C(M_0) $ to denote constants from line to line.

\section{Basic energy estimates}\label{sec: basic}

In this section, we prove some basic energy estimates.

\subsection{Lower order estimates}

For the lower order energy norm $ \cF $ defined in \eqref{def: cF}, we have the following estimates.

\begin{prop}\label{prop: low order}
	For $ t\in[0,\,T] $, there holds that
	\begin{equation}\label{estim: cF}
		\cF(t)
		\leq C(M_0)+TC(M).
	\end{equation}
	Furthermore,
	\begin{equation}\label{estim: low Linfty}
		\norm{(p,\,u,\,S)}_{W^{\ka-3,\infty}(\Om)}
		+\sum_{l=1}^{\ka-2}\norm{(D_t^lp,\,D_t^lu,\,D_t^lS)}_{W^{\ka-2-l,\infty}(\Om)}
		\leq C(M_0)+TC(M).
	\end{equation}
\end{prop}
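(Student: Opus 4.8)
The plan is to prove \eqref{estim: cF} and \eqref{estim: low Linfty} by a bootstrap argument driven by the material derivative $D_t$, treating the full system \eqref{eq: system: Euler: p u S} together with the transport equation \eqref{eq: f: Dt f} for the interface. The first step is to estimate the spatial regularity of $(p,u,S)$ at fixed order of $D_t$-derivatives by elliptic regularity: the momentum equation $\rho\ppm D_t\ppm u\ppm+\nabla p\ppm=0$ together with $\divg u\ppm = -\frac{1}{\ga p\ppm}D_t p\ppm$ forms an elliptic div-curl system for $u\ppm$ in $\Om\ppm$, with boundary condition $u\ppm\cdot n\ppm=0$ on $\Ga\ppm$ and $u\ppm\cdot N=D_t f$ on $\Ga_f$ (from \eqref{eq: f: Dt f}). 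Likewise, taking the divergence of the momentum equation produces an elliptic equation for $p\ppm$ with $\nabla_N p\ppm$ controlled on $\Ga_f$ via the Taylor-sign quantity and $\p_3 p\ppm$ on $\Ga\ppm$. So the scheme is: control $D_t$-derivatives first, then upgrade to full spatial derivatives via the elliptic estimates of Appendix \ref{sec: elliptic}.

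Next I would set up the $D_t$-hierarchy. Commuting $D_t^l$ through \eqref{eq: system: Euler: p u S}, one gets for each $l$ a transport-type system for $(D_t^l p, D_t^l u, D_t^l S)$ whose source terms involve lower-order $D_t$-derivatives and spatial derivatives of $u$ (from the commutators $[D_t,\nabla]=-\nabla u\cdn$ and $[D_t,\divg]$). Since $D_t$ is a genuine transport operator along the (continuous) velocity field $u$, a standard energy estimate — multiply by the appropriate weight, integrate over $\Om\ppm$, use $\divg u$ bounded and the boundary conditions (which contribute no boundary terms once $u\cdot n\ppm=0$ on $\Ga\ppm$ and the jump conditions \eqref{eq: RH condition}, \eqref{eq: compatibility} make the $\Ga_f$-contributions cancel between the $\pm$ sides) — yields $\frac{d}{dt}\norm{D_t^l(p,u,S)}_{L^2}^2 \les C(M)\big(1+\norm{(p,u,S)}_{H^{\ka-l}}^2 + \text{lower order}\big)$. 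Iterating from $l=\ka$ down, interleaving with the elliptic gain of one spatial derivative, closes the loop: at each stage the right-hand side only involves quantities already controlled (with one fewer $D_t$ or already in $\cF$), so Grönwall gives $\cF(t)\le C(M_0)+TC(M)$. The use of $M=\sup\cE$ on the right is what makes the $TC(M)$ structure appear — all "bad" top-order terms get absorbed into $M$ times $T$.

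The second estimate \eqref{estim: low Linfty} then follows by Sobolev embedding: in three space dimensions $H^{s}(\Om\ppm)\hookrightarrow W^{s-2,\infty}(\Om\ppm)$ for the relevant ranges (more precisely $H^{\ka-l}\hookrightarrow W^{\ka-2-l,\infty}$ since $\ka-l-(\ka-2-l)=2>3/2$), applied termwise to the bounds already obtained in $\cF$, together with the trace/interior estimates for $f$. One should also record here the lower bounds on $\rho\ppm$ and $p\ppm$ and the Taylor sign (items in Theorem \ref{thm}): these propagate because $D_t p\ppm$, $D_t S\ppm$ are controlled in $L^\infty$ by \eqref{estim: low Linfty} itself, so $p\ppm$, $\rho\ppm$ stay in $[c_0/2, 2C_0]$ for $t\le T$ small; these lower bounds are needed even to run the elliptic estimates (ellipticity constant $\sim 1/\rho\ppm$) and are part of what $C(M_0,c_0,C_0)$ encodes.

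The main obstacle I expect is the bookkeeping of the coupling between the interface regularity and the interior quantities at the \emph{top} of the hierarchy, and in particular ensuring the elliptic estimates do not cost regularity. Specifically, the elliptic problem for $u\ppm$ needs the boundary datum $D_t f$ on $\Ga_f$ at the right trace regularity, and $D_t f = u_3\ppm|_{\Ga_f}$ couples back to $u$; closing this requires the optimal interface estimate (of the kind proved later in Section \ref{sec: f} from the hyperbolic evolution equation \eqref{eq: evolution: f}) at the lower order $\ka-1$, which at the level of $\cF$ is comparatively soft but still must be invoked carefully. The other delicate point is verifying that all the $\Ga_f$ boundary terms in the $D_t$-energy estimates genuinely cancel between $\Om^+$ and $\Om^-$; this is exactly where the compatibility conditions \eqref{eq: compatibility} (giving $\jump{D_t^l u}=\jump{D_t^l p}=0$) and the structure $m_N=0$ are used, and it must be checked that no uncompensated term of order $\ka-l+\frac12$ survives on the interface. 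Once these cancellations and the one-derivative elliptic gain are in place, the Grönwall closure is routine.
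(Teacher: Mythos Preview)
Your proposal would ultimately work, but it is far more elaborate than what this proposition requires; you have in effect outlined the machinery used for the \emph{full} energy $\cE$ (Sections~\ref{sec: f}--\ref{sec: full p u}) rather than for the lower-order quantity $\cF$. The key observation you are missing is that $\cF$ sits strictly one order below $\cE$: each term $\norm{D_t^l(p,u,S)}_{H^{\ka-l}}$ in $\cF$ has its $D_t$-image $\norm{D_t^{l+1}(p,u,S)}_{H^{\ka-l}}$ already contained in $\cE\leq M$. The paper therefore simply writes
\[
D_t\bigl(\Lam^{\ka-l}D_t^l(p,u,S)\bigr)=\Lam^{\ka-l}D_t^{l+1}(p,u,S)-[\Lam^{\ka-l},\,D_t]D_t^l(p,u,S),
\]
notes that both terms on the right are bounded in $L^2(\Om)$ by $C(M)$, and integrates in time. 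There is no div--curl system, no elliptic equation for $p$, no interface estimate, and no induction or Gr\"onwall loop to close; the transport identity $\frac{\rd}{\rd t}\int_{\Om}g=\int_{\Om}D_tg+\int_{\Om}(\dvg u)g$ produces no $\Ga_f$-boundary term because $u$ is continuous across $\Ga_f$ and tangent to $\Ga\ppm$. Consequently your ``main obstacles'' (interface coupling at trace regularity, boundary cancellations requiring \eqref{eq: compatibility}) are non-issues here --- they only become relevant at top order in the later sections. The Sobolev embedding step for \eqref{estim: low Linfty} you have exactly right.
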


\begin{proof}
	For $ 1\leq l\leq\ka $, since
	\begin{equation*}
		\begin{cases}
			D_t\Lam^{\ka-l}D_t^{l}p
			=\Lam^{\ka-l}D_t^{l+1}p
			-[\Lam^{\ka-l},\,D_t]D_t^lp, \\
			D_t\Lam^{\ka-l}D_t^{l} u
			=\Lam^{\ka-l}D_t^{l+1}u
			-[\Lam^{\ka-l},\,D_t]D_t^lu, \\
			D_t\Lam^{\ka-l}D_t^{l}S
			=0,
		\end{cases}
	\end{equation*}
	we have 
	\begin{equation*}
		\ba
		& \frac{1}{2}\frac{\rd}{\rd t}
		\norm{(D_t^lp,\,D_t^l u,\,D_t^lS)}_{H^{\ka-l}(\Om)}^2 \\
		= & \iOm \BL D_t\Lam^{\ka-l}D_t^{l}p
		\cdot \Lam^{\ka-l}D_t^{l}p
		+D_t\Lam^{\ka-l}D_t^{l} u
		\cdot \Lam^{\ka-l}D_t^{l} u
		+D_t\Lam^{\ka-l}D_t^{l} S
		\cdot \Lam^{\ka-l}D_t^{l} S\BR\rd x \\
		& +\iOm\frac{\dvg u}{2}
		\BL|\Lam^{\ka-l}D_t^{l} p|^2
		+|\Lam^{\ka-l}D_t^{l}u|^2
		+|\Lam^{\ka-l}D_t^{l} S|^2\BR\rd x \\
		\leq & C(M). 
		\ea
	\end{equation*}
	The case when $ l=0 $ follows in a similar way. 
	
	Application of the Sobolev inequalities to \eqref{estim: cF} proves \eqref{estim: low Linfty}.
	
\end{proof}

\subsection{Tangential energy of $ (p,\,u) $}

Recall the equations for the pressure and velocity $ (p,\,u) $ in \eqref{eq: system: Euler: p u S}:
\begin{equation}\label{eq: (p,u)}
	\begin{cases}
		\frac{1}{\ga p}D_t p+\dvg u=0, \\
		\rho D_t u+\nabla p=0.
	\end{cases}
\end{equation}
For $ 0\leq l\leq\ka+1 $, by taking $ D_t^l $ to both sides of \eqref{eq: (p,u)}, we have the system for $ (D_t^lp,\,D_t^lu) $ as
\begin{equation}\label{eq: (p,u) high}
	\begin{cases}
		\frac{1}{\ga p}D_t^{l+1}p+\dvg D_t^lu
		=\cN_p^l, \\
		\rho D_t^{l+1} u+\nabla D_t^l p
		=\cN_u^l,
	\end{cases}
\end{equation}
where
\begin{equation*}
	\cN_p^l=[\frac{1}{\ga p},\,D_t^{l}]D_tp
		+[\dvg,\,D_t^{l}]u,
	\quadd
	\cN_u^l=[\rho ,\,D_t^{l}]D_tu
		+[\nabla,\,D_t^l]p.
\end{equation*}

\begin{prop}\label{prop: p u tangential}
	For $ t\in[0,\,T] $, there holds that
	\begin{equation}\label{estim: Dtl(p, u)}
		\sum_{l=0}^{\ka+1}\norm{(D_t^lp,\,D_t^lu)}_{L^2(\Om)}
		\leq C(M_0)+TC(M).
	\end{equation}
\end{prop}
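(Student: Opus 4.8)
The plan is to run a standard energy estimate on the symmetrized system \eqref{eq: (p,u) high} for each $l$ with $0\le l\le\ka+1$, testing the pressure equation against $D_t^lp$ with the weight $\frac{1}{\ga p}$ and the velocity equation against $\rho\, D_t^lu$, so that the principal terms $\divg D_t^lu$ and $\nabla D_t^lp$ cancel upon integration by parts. The boundary contributions produced by this integration by parts are the crux of the argument: on the fixed boundaries $\Ga\ppm$ they vanish because $u\cdot n\ppm|_{\Ga\ppm}=0$ (hence $D_t^lu\cdot n\ppm|_{\Ga\ppm}=0$, using that $D_t$ is tangential to $\Ga\ppm$), and on the interface $\Ga_f$ the jump term $\iT \jump{D_t^lp\; D_t^lu\cdot N}$ vanishes by the compatibility conditions \eqref{eq: compatibility}, which give $\jump{D_t^lp}=\jump{D_t^lu}=0$ for $0\le l\le\ka+1$. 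Concretely, I would work on $\Om^+$ and $\Om^-$ separately, integrate by parts the term $\iOmppm D_t^lp\,\divg D_t^lu$ to produce $-\iOmppm \nabla D_t^lp\cdot D_t^lu$ plus a boundary integral over $\Ga_f$ with the outer normal $N\ppm=\mp N$, and then the sum over $\pm$ of these boundary integrals is exactly $\iT\jump{D_t^lp\; D_t^lu\cdot N}=0$.

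The differentiated weights and coefficients contribute lower-order commutator terms. Specifically, $\frac{\rd}{\rd t}$ hitting the weight $\frac{1}{\ga p}$ produces $-\frac{D_tp}{\ga p^2}$ times $|D_t^lp|^2$, and similarly $D_t\rho$ times $|D_t^lu|^2$; together with the commutator source terms $\cN_p^l=[\frac{1}{\ga p},D_t^l]D_tp+[\divg,D_t^l]u$ and $\cN_u^l=[\rho,D_t^l]D_tu+[\nabla,D_t^l]p$, these are all controlled by $C(M)$. The key point for estimating the commutators without loss of regularity is that $[\divg,D_t^l]u$ and $[\nabla,D_t^l]p$ involve at most $l$ material derivatives and $l+1$ spatial derivatives distributed so that the total order matches what $\cE$ controls; this is where one invokes $\cE(t)\le M$, the Moser-type product estimates (Lemma \ref{lem: harmonic coord.} part (3)), and the lower bounds $c_0\le\rho\ppm,p\ppm\le C_0$ together with their propagated versions from Proposition \ref{prop: low order}. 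Summing the resulting differential inequality $\frac{\rd}{\rd t}\sum_{l=0}^{\ka+1}\norm{(D_t^lp,D_t^lu)}_{L^2(\Om)}^2\le C(M)$ over $l$ and integrating in time from $0$ to $t\le T$ yields $\sum_l\norm{(D_t^lp,D_t^lu)}_{L^2(\Om)}^2\le C(M_0)+TC(M)$, whence \eqref{estim: Dtl(p, u)} after taking square roots.

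The main obstacle I anticipate is bookkeeping the top-order case $l=\ka+1$: there $D_t^{\ka+1}p$ and $D_t^{\ka+1}u$ sit in $L^2(\Om)$ only (no spatial derivatives to spare), so in the commutator $[\divg,D_t^{\ka+1}]u$ every term that would place a spatial derivative on the highest material-derivative factor must be shown to actually carry at most $D_t^{\ka+1}u$ with no derivative, i.e. one must check that the genuinely dangerous term $D_t^{\ka+1}(\divg u)$ is already accounted for on the left-hand side and the remaining pieces are of the form (coefficient with controlled norm)$\times(D_t^{\le\ka+1}$ of $u$ with total order $\le\ka+1)$. This requires carefully expanding the commutators $[\p_J,D_t^l]$ using $[\p_J,D_t]=(\p_Ju_K)\p_K$ and iterating, then matching each resulting multi-index against the norms appearing in $\cE$; it is routine but must be done with care, and it is the only place where the specific structure of the energy $\cE$ in \eqref{def: cE} (in particular the pairing of $\ka+1-l$ spatial derivatives with $l$ material derivatives) is essential. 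The boundary terms, by contrast, cause no trouble precisely because the compatibility conditions were built to kill them.
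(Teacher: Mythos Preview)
Your proposal is correct and follows essentially the same approach as the paper: multiply the $p$-equation by $D_t^lp$ and the $u$-equation by $D_t^lu$ (the weights $\frac{1}{\ga p}$ and $\rho$ are already present in \eqref{eq: (p,u) high}, so no additional weight is needed in the test functions), integrate by parts so that $\nabla D_t^lp\cdot D_t^lu+\dvg D_t^lu\,D_t^lp$ becomes the divergence $\dvg(D_t^lu\,D_t^lp)$, and kill the resulting boundary term via \eqref{eq: bdry: Ga pm} on $\Ga\ppm$ and the compatibility conditions \eqref{eq: compatibility} on $\Ga_f$. The paper simply asserts $\norm{(\cN_p^l,\cN_u^l)}_{L^2(\Om)}\le C(M)$ without spelling out the top-order commutator bookkeeping you describe, but your more detailed account of the $l=\ka+1$ case is consistent with the structure of $\cE$ and does not diverge from the paper's argument.
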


\begin{proof}

Since $ \ka\geq4 $ and
\begin{equation*}
\ba
	\norm{(\cN_p^l,\,\cN_u^l)}_{L^2(\Om)}
	\leq & C(M),
\ea
\end{equation*}
energy estimates of \eqref{eq: (p,u) high} yield that
\begin{equation}\label{ineq: tangential energy}
\ba
	& \frac{\rd}{\rd t}\iOm\BL\frac{1}{\ga p}\frac{|D_t^l p|^2}{2}+\rho\frac{|D_t^lu|^2}{2}\BR\rd x \\
	= & \iOm D_t\BL\frac{1}{\ga p}\frac{|D_t^lp|^2}{2}+\rho\frac{|D_t^lu|^2}{2}\BR\rd x
	+\iOm(\dvg u)\BL\frac{1}{\ga p}\frac{|D_t^lp|^2}{2}+\rho\frac{|D_t^lu|^2}{2}\BR\rd x \\
	= & \iOm \BL D_t\big(\frac{1}{\ga p}\big)\frac{|D_t^lp|^2}{2}+D_t\rho\frac{|D_t^lu|^2}{2}\BR\rd x
	+\iOm(\dvg u)\BL\frac{1}{\ga p}\frac{|D_t^lp|^2}{2}+\rho\frac{|D_t^lu|^2}{2}\BR\rd x \\
	& + \iOm\BL D_t^lp\cN_p^l+D_t^lu\cdot\cN_u^l\BR\rd x
	+\iOm\dvg(D_t^luD_t^lp)\rd x \\
	\leq & C(M),
\ea
\end{equation}
where we have used the boundary conditions \eqref{eq: bdry: Ga pm} and the compatibility conditions \eqref{eq: compatibility} .

\end{proof}

\subsection{Estimates of $ \omega $ and $ S $}

From \eqref{eq: evolution: om}--\eqref{eq: evolution: S}, the vorticity $ \om $ and the entropy $ S $ satisfy the following transport equations respectively:
\begin{equation}\label{eq: om}
	D_t\om
	=\om\cdn u-\om\dvg u
	-\nabla\frac{1}{\rho}\times\nabla p,
\end{equation}
and
\begin{equation}\label{eq: S}
	D_tS=0.
\end{equation}
Direct energy estimates yield the following result.
\begin{prop}\label{prop: om & S}
	For $ t\in[0,\,T] $, there hold that
	\begin{equation}\label{estim: om}
		\norm{\om}_{H^{\ka-1}(\Om)}
		+\sum_{l=1}^{\ka}\norm{D_t^l\om}_{H^{\ka-l}(\Om)}
		\leq M_0+T C(M),
	\end{equation}
	and
	\begin{equation}\label{estim: S}
			\norm{S}_{H^{\ka}(\Om)}
			+\sum_{l=1}^{\ka+1}\norm{D_t^lS}_{H^{\ka+1-l}(\Om)}
			\leq M_0+T C(M).
		\end{equation}
	
\end{prop}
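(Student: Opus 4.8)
The plan is to run straightforward $L^2$-type energy estimates on the material-derivative-commuted transport equations \eqref{eq: om} and \eqref{eq: S}, exploiting that the principal part is pure transport by $u$ and that all source terms are controlled by lower-order quantities already estimated. Since $\jump{u}=0$, the vector field $u$ is Lipschitz across $\Ga_f$ (by the $L^\infty$ bounds in \eqref{estim: low Linfty} once $\ka\geq4$), so transport estimates can be carried out in $\Om$ without interface contributions; the fixed-boundary condition \eqref{eq: bdry: Ga pm} guarantees $u\cdot n\ppm=0$ on $\Ga\ppm$, which kills the boundary term when integrating by parts in the transport energy.

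For the entropy, I would apply $\Lam^{\ka+1-l}D_t^l$ to \eqref{eq: S}, obtaining $D_t\big(\Lam^{\ka+1-l}D_t^lS\big)=-[\Lam^{\ka+1-l}D_t^l,\,D_t]S$; the commutator has the form $\Lam^{\ka+1-l}$ acting on lower-order $D_t$-derivatives of $S$ with coefficients built from $\nabla u$ and its $D_t$-derivatives, hence is bounded in $L^2(\Om)$ by $C(M)$ using Lemma \ref{lem: harmonic coord.}(3) for the product estimates and the already-established control of $(u,D_t u,\dots)$. Then
\[
\frac{\rd}{\rd t}\norm{\Lam^{\ka+1-l}D_t^lS}_{L^2(\Om)}^2
=\iOm(\dvg u)\,|\Lam^{\ka+1-l}D_t^lS|^2\,\rd x
-2\iOm[\Lam^{\ka+1-l}D_t^l,\,D_t]S\cdot\Lam^{\ka+1-l}D_t^lS\,\rd x
\leq C(M),
\]
and integrating in time from $0$ to $t\leq T$ yields \eqref{estim: S}. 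Summing over $0\leq l\leq\ka+1$ (with $l=0$ treated identically) gives the stated bound with constant $M_0+TC(M)$; note one needs $\ka+1-l\geq0$, and for the top index $l=\ka+1$ the statement is simply $\norm{D_t^{\ka+1}S}_{L^2}$.

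For the vorticity, I would similarly apply $\Lam^{\ka-l}D_t^l$ to \eqref{eq: om}. The right-hand side of \eqref{eq: om} is $\om\cdn u-\om\dvg u-\nabla\frac1\rho\times\nabla p$; after commuting with $\Lam^{\ka-l}D_t^l$ every resulting term is a product of factors each of which is one of $u$, $p$, $S$ (via $\rho=\rho(p,S)$ from \eqref{eq: rho by p and S}), or $\om$, together with up to $\ka$ spatial and $l+1$ material derivatives distributed among them; by Lemma \ref{lem: harmonic coord.}(3) and Proposition \ref{prop: low order}, \ref{prop: p u tangential}, and \eqref{estim: Dtl(p, u)}, every such term is bounded in $L^2(\Om)$ by $C(M)$ provided the highest-order factor carries at most $\norm{\cdot}_{H^{\ka-l}}$ or $\norm{D_t^{l'}\cdot}_{H^{\ka-l'}}$-type norms, which is exactly the budget available since $\ka\geq4$ ensures the remaining factors land in $L^\infty$. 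Crucially the term $\om\cdn u$ has $\om$ differentiated the same number of times as the unknown, so it contributes a term $\les\norm{\nabla u}_{L^\infty}\norm{\om}$-type bound that is absorbed into $C(M)$ and does \emph{not} cost regularity. The energy identity then reads, with $v^l:=\Lam^{\ka-l}D_t^l\om$,
\[
\frac{\rd}{\rd t}\norm{v^l}_{L^2(\Om)}^2
=\iOm(\dvg u)|v^l|^2\,\rd x
+2\iOm v^l\cdot\Lam^{\ka-l}D_t^l\big(\om\cdn u-\om\dvg u-\nabla\tfrac1\rho\times\nabla p\big)\,\rd x
-2\iOm v^l\cdot[\Lam^{\ka-l}D_t^l,\,D_t]\om\,\rd x
\leq C(M),
\]
and integration in time over $[0,t]$, $t\leq T$, followed by summation over $0\leq l\leq\ka$, yields \eqref{estim: om}.

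The main obstacle is bookkeeping rather than conceptual: one must check that in \emph{every} commutator term the number of derivatives never exceeds the energy budget $\cF$/$\cE$ allows, in particular that the self-interaction $\om\cdn u$ and the baroclinic term $\nabla\frac1\rho\times\nabla p$—which at top order would naively need $\om\in H^{\ka}$ or $p\in H^{\ka+1}$—are in fact handled by the product structure (one factor always has $\leq\ka-3$ derivatives and sits in $L^\infty$). Since $\om=\curl u$ with $u\in H^\ka(\Om)$, the base case $\om\in H^{\ka-1}(\Om)$ is consistent, and no loss of regularity occurs. The only analytic subtlety is that $\Lam=\sang{\nabla}$ is a full (not merely tangential) derivative, so unlike the tangential estimates of the previous subsection there is no boundary term to control beyond what $u\cdot n\ppm=0$ already gives; this is why the transport structure of \eqref{eq: om}--\eqref{eq: S}, as opposed to the elliptic/wave structure of $p$ and $u$, makes the proof routine.
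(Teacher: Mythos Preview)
Your proposal is correct and follows essentially the same route as the paper: apply $\Lam^{\ka-l}D_t^l$ (respectively $\Lam^{\ka+1-l}D_t^l$) to the transport equations \eqref{eq: om} and \eqref{eq: S}, use the Reynolds-type identity $\frac{\rd}{\rd t}\iOm|v|^2\rd x=\iOm\big(2D_tv\cdot v+(\dvg u)|v|^2\big)\rd x$ (which is legitimate since $\jump{u}=0$ and $u\cdot n\ppm=0$), and bound the commutator $[D_t,\Lam^{\ka-l}]D_t^l\om$ together with the transported source terms by $C(M)$. The paper's write-up is terser---it simply records the commuted equation \eqref{eq: om high} and the resulting energy inequality---but your additional remarks on why the product structure prevents any loss of derivatives (one factor always sits in $L^\infty$ when $\ka\geq4$) are exactly the bookkeeping the paper leaves implicit.
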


\begin{proof}
	Taking $ \Lam^{\ka-l}D_t^l $ $ (1\leq l\leq\ka) $ to both sides of \eqref{eq: om}, we have
	\begin{equation}\label{eq: om high}
		D_t\Lam^{\ka-l}D_t^l\om
		=[D_t,\,\Lam^{\ka-l}]D_t^l\om
		+\Lam^{\ka-l}D_t^l(\om\cdn u-\om\dvg u
		-\nabla\frac{1}{\rho}\times\nabla p).
	\end{equation}
	Then, the Sobolev inequalities and \eqref{eq: om high} can be applied to get that
	\begin{equation*}
		\ba
			& \frac{1}{2}\frac{\rd}{\rd t}\iOm|\Lam^{\ka-l}D_t^l\om|^2\rd x \\
			= & \iOm D_t\Lam^{\ka-l}D_t^l\om\cdot
			\Lam^{\ka-l}D_t^l\om\rd x
			+\iOm\frac{\dvg u}{2}|\Lam^{\ka-l}D_t^l\om|^2\rd x \\
			\leq & C(M)\norm{D_t\Lam^{\ka-l}D_t^l\om}_{L^2(\Om)}
			+C(M)\norm{\dvg u}_{L^{\infty}({\Om})} \\
			\leq & C(M).
		\ea
	\end{equation*}
	This proves \eqref{estim: om} with $ 1\leq l\leq\ka+1 $. The case when $ l=0 $ follows similarly. 
	
	For the entropy $ S $ in \eqref{eq: S}, since
	\begin{equation*}
		D_t\Lam^{\ka+1-l}D_t^lS=[D_t,\,\Lam^{\ka+1-l}]D_t^lS, 
	\end{equation*}
	the estimates in \eqref{estim: S} can be proved just as those in \eqref{estim: om}. 
\end{proof}

\section{Estimates of the interface $ f $}\label{sec: f}
In this section, we shall derive the estimates of the interface $ f $.

\begin{prop}\label{prop: f}
	For $ t\in[0,\,T] $, there holds that
	\begin{equation}\label{estim: f}
		\norm{f}_{H^{\ka}(\TT)}
		+\norm{D_t f}_{H^{\ka-\frac{1}{2}}(\TT)}
		\leq M_0+TC(M).
	\end{equation}
\end{prop}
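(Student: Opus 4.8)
The plan is to estimate $f$ and $D_tf$ separately, using the evolution equation \eqref{eq: evolution: f} for the higher-order control of $D_tf$ together with elementary transport-type bounds for $f$ itself. First I would deal with $\norm{f}_{H^\ka(\TT)}$: since $\p_tf = u\cdot N = u_3 - u_i\p_if$ on $\Ga_f$, one has $\p_tf = u_3\big|_{\Ga_f} - u_i\big|_{\Ga_f}\p_if$, so differentiating in $\bx$ and applying a tangential derivative $\Ups^{\ka}$, the worst term is $u_i\big|_{\Ga_f}\,\bp_i\Ups^{\ka}f$, which is handled by a commutator/product estimate (Lemma \ref{lem: harmonic coord.} and the paradifferential tools) after noting that $u\big|_{\Ga_f}$ is controlled by $\norm{u}_{H^{\ka}(\Om)}$ via trace, hence by $\cE$. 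The trace of $u_3$ contributes $\norm{u_3}_{H^{\ka-1/2}(\Ga_f)}\le C(M)$. Integrating in time gives $\norm{f}_{H^{\ka}}\le M_0 + TC(M)$; the assertion (2) of Theorem \ref{thm} on the range of $f$ follows from the same bound applied at the $L^\infty$ level for a short time.

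Next, the $H^{\ka-1/2}$ bound on $D_tf$. Here I would test the evolution equation \eqref{eq: evolution: f}, namely
\[
	D_t^2\bp_if + \fa\,T_\lam\bp_if = \cN^+ + \cN^-,
\]
with $\Ups^{\ka-3/2}D_t\bp_if$ (or equivalently work with $\Ups^{2\ka-3}$ weights split symmetrically onto the two factors), exploiting that $\fa>0$ by \eqref{eq: fa to taylor sign}, that $T_\lam$ is a positive-order-one paradifferential operator with $\lam$ as in \eqref{def: lambda}, and that $T_\lam$ is essentially self-adjoint up to lower-order errors. The natural energy is
\[
	\cE_f(t) = \tfrac12\iTT\Big( |\Ups^{\ka-3/2}D_t\bp_if|^2 + \fa\, \Ups^{\ka-3/2}\bp_if\cdot T_\lam\Ups^{\ka-3/2}\bp_if \Big)\,\rd\bx,
\]
which is coercive: the second term dominates $\norm{\bp_if}_{H^{\ka-1}}^2$ up to constants depending on $c_0,C_0,M$ (using $T_\lam\ge cT_{|\xi|}$ and the lower bound on $\fa$), so $\cE_f \simeq \norm{D_tf}_{H^{\ka-3/2}}^2 + \norm{f}_{H^{\ka}}^2$ modulo the already-controlled quantities. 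Differentiating $\cE_f$ in $t$ produces: the right-hand side pairing $\iTT \Ups^{\ka-3/2}(\cN^++\cN^-)\cdot\Ups^{\ka-3/2}D_t\bp_if$, which by the definition \eqref{def: cN} of $\cN\ppm$ — whose terms are $\nabla_N\bp_ip\ppm$, $\nabla_Np\ppm$ times $\bp_i\rho\ppm/\rho\ppm$, $\bp_iu_j\ppm D_t\bp_jf$, and $R\ppm\bp_if$ — is bounded by $C(M)$ once traces of $\nabla p$, $\nabla\rho$, $\nabla u$ are taken in $H^{\ka-1/2}(\Ga_f)$ and \eqref{estim: Rf} is used for the remainder; the commutator terms from $[D_t,\Ups^{\ka-3/2}]$ and from moving $D_t$ past $\fa$ and $T_\lam$ (a paradifferential commutator of order $\le 1$, so it costs exactly the $H^{\ka-1/2}$ and $H^{\ka}$ norms already in $\cE_f$); and the term where $D_t$ lands on the coefficient $\fa$ or on the symbol $\lam$ (which involves $\nabla f$ and hence $D_t\nabla f = \nabla D_tf - \nabla u\cdot\nabla f$, again controlled). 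Collecting, $\frac{\rd}{\rd t}\cE_f \le C(M)$, whence $\cE_f(t)\le M_0^2 + TC(M)$ and, by coercivity, the stated bound on $D_tf$ in $H^{\ka-1/2}$ — note the apparent half-derivative gain from $\ka-3/2$ to $\ka-1/2$ comes from the elliptic/positive nature of $T_\lam$, exactly as in the water-waves energy of \cite{MR3060183,MR3745155}.

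The main obstacle is the paradifferential bookkeeping in the energy identity: one must verify that every commutator — $[D_t,\Ups^{\ka-3/2}]$, $[D_t,T_\lam]$, $[\Ups^{\ka-3/2},T_\lam]$, and the defect in self-adjointness of $T_\lam$ — is genuinely of order $\le 1$ relative to $\bp_if$ and is absorbed by $\cE_f$ itself, so that no regularity is lost; this is where the optimal choice of the Dirichlet boundary condition in \eqref{def: harmonic extension} and the symbol estimates of \eqref{eq: decom of Gf} are used. A secondary point is trace control: all coefficients in $\cN\ppm$ and in $\fa$, $\lam$ are restrictions to $\Ga_f$ of $H^{\ka}(\Om)$-functions or their first derivatives, so the trace theorem on $\Om_f\ppm$ (via Lemma \ref{lem: harmonic coord.}) together with Proposition \ref{prop: low order} keeps them bounded by $C(M)$ with a $W^{1,\infty}$ tail bounded by $C(M_0)+TC(M)$, which is what allows the Grönwall-free form $M_0 + TC(M)$ of the estimate.
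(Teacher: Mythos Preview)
Your second paragraph --- the wave-type energy $\cE_f$ built from $D_t^2\bp_if+\fa T_{\lam}\bp_if=\cN^++\cN^-$ --- is exactly the paper's mechanism, and the paper reads off \emph{both} $\norm{D_tf}_{H^{\ka-1/2}}$ and $\norm{f}_{H^{\ka}}$ from the coercivity of $\iTT\big(|D_tF|^2+\fa|T_{\sqrt{\lam}}F|^2\big)\rd\bx$ with $F=\Ups^{\ka-3/2}\bp_if$. Your first paragraph, however, does not close: applying $\Ups^{\ka}$ to $\p_tf=u_3|_{\Ga_f}-u_i|_{\Ga_f}\p_if$ and pairing with $\Ups^{\ka}f$, the source term forces you to control $u_3|_{\Ga_f}$ in $H^{\ka}(\TT)$, while the trace of $u\in H^{\ka}(\Om)$ only lands in $H^{\ka-1/2}(\Ga_f)$ --- you are half a derivative short, and this half derivative is precisely what the order-one operator $\fa T_{\lam}$ in the wave energy supplies. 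The paper does not attempt a top-order transport estimate; it uses transport only for $\norm{f}_{L^2}$ (equation~\eqref{ineq: energy: f L2}) and gets $\norm{f}_{H^{\ka}}$ from the $\fa|T_{\sqrt{\lam}}F|^2$ term. Since your $\cE_f$ already delivers this, drop the first paragraph.

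The more substantive gap is in your handling of $\cN^\pm$. The term $\nabla_N\bp_ip^\pm$ must be placed in $H^{\ka-3/2}(\Ga_f)$ for the pairing with $\Ups^{\ka-3/2}D_t\bp_if$ to be $\le C(M)$. A direct trace argument fails by a full derivative: $p\in H^{\ka}(\Om)$ gives $\bp_ip\in H^{\ka-1}(\Om)$, hence $\nabla\bp_ip\in H^{\ka-2}(\Om)$, whose trace is only $H^{\ka-5/2}(\Ga_f)$ (and your claimed ``trace of $\nabla p$ in $H^{\ka-1/2}$'' would already require $p\in H^{\ka+1}$). The paper closes this using the structural identity $\bp_ip\big|_{\Ga_f}=0$ from \eqref{eq: bpi p =0} --- a consequence of the entropy-wave compatibility conditions and $\jump{\rho}\neq0$ --- and then runs an elliptic estimate on $\bp_ip$: with zero Dirichlet data on $\Ga_f$, Neumann data on $\Ga^\pm$, and $\Lap\bp_ip\in H^{\ka-2}(\Om)$ (computed via \eqref{eq: bpi p elliptic} and the wave equation \eqref{eq: elliptic p} for $p$), elliptic regularity returns $\nabla_N\bp_ip\in H^{\ka-3/2}(\Ga_f)$. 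This is not a technicality but the place where the special geometry of the entropy wave enters the interface estimate; without it the argument does not close at the claimed regularity.
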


Recall the equation of $ f $ in \eqref{eq: evolution: f}:
\begin{equation}\label{eq: f section}
	D_t^2\bp_if+\fa T_{\lam}\bp_if
	= \cN^++\cN^-,
\end{equation}
where $ \lam $, $ \fa $, and $ \cN\ppm $ are given by \eqref{def: lambda}, \eqref{def: fa}, and \eqref{def: cN} respectively. The rest of the section is devoted to the proof of Proposition \ref{prop: f}.

Set
\begin{equation*}
	F =\Ups^{\ka-\frac{3}{2}}\wb{\p}_if
	=\sang{\bp}^{\ka-\frac{3}{2}}\wb{\p}_if.
\end{equation*}
Taking $ \Ups^{\ka-\frac{3}{2}} $ to both sides of \eqref{eq: f section}, we have the equation for $  F  $ as
\begin{equation}\label{eq: Dt2 pi f: high order}
	\begin{split}
		D_t^2 F
		+\fa T_{\lam} F
		= &
		-[\Ups^{\ka-\frac{3}{2}},\,D_t^2]  \wb{\p}_if
		- [\Ups^{\ka-\frac{3}{2}},\,\fa T_{\lam}]  \wb{\p}_if
		+\Ups^{\ka-\frac{3}{2}}(\cN^++\cN^-).	
	\end{split}
\end{equation}
Direct computation shows that
\begin{equation*}
	\begin{split}
		& \frac{1}{2}\frac{\rd}{\rd t}
		\iT\BL|D_t F |^2+\fa|\Tslam F |^2\BR \rd\bx \\
		= & \frac{1}{2}\iT D_t
		\BL|D_t F |^2+\fa|\Tslam F |^2\BR \rd\bx
		+\frac{1}{2}\iT(  \wb{\p}_ju_j)
		\BL|D_t F |^2+\fa|\Tslam F |^2\BR \rd\bx \\
		= & \iT\BL D_t^2 F \cdot D_t F
		+\fa\cdot D_t\Tslam F
		\cdot \Tslam F \BR \rd\bx \\
		& + \frac{1}{2}\iT\BL( \wb{\p}_ju_j)|D_t F |^2
		+(\wb{\p}_ju_j\fa+D_t\fa)
		|\Tslam F |^2 \BR\rd\bx \\
		= & \iT\BL D_t^2 F + \fa\Tlam F \BR  D_t  F  \rd\bx
		+\iT \fa \BL\Tslam^*\Tslam F-\Tlam F\BR D_tF\rd\bx \\
		& + \iT [\fa D_t,\,\Tslam] F
		\cdot\Tslam F  \rd\bx
		+\frac{1}{2}\iT\BL(\wb{\p}_ju_j)|D_t F |^2
		+(\wb{\p}_ju_j\fa+D_t\fa)
		|\Tslam F |^2\BR \rd\bx.
	\end{split}
\end{equation*}
Therefore,
\begin{equation}\label{eq: dt E1}
	\begin{split}
		& \frac{1}{2}\frac{\rd}{\rd t}
		\BL|D_t F |^2+\fa|\Tslam F |^2\BR \rd\bx \\
		= & \frac{1}{2}\iT\BL(\wb{\p}_ju_j)|D_t F |^2
		+(\wb{\p}_ju_j\fa+D_t\fa)
		|\Tslam F |^2\BR \rd\bx \\
		& +\iT \fa \BL\Tslam^*\Tslam F-\Tlam F\BR D_tF\rd\bx 
		+ \iT [\fa D_t,\,\Tslam] F
		\cdot\Tslam F  \rd\bx \\
		& -\iT [\Ups^{\ka-\frac{3}{2}},\,D_t^2]  \wb{\p}_if
		\cdot D_tF\rd\bx 
		-\iT [\Ups^{\ka-\frac{3}{2}},\,\fa T_{\lam}]  \wb{\p}_if
		\cdot D_tF\rd\bx \\
		& +\iT\Ups^{\ka-\frac{3}{2}}(\cN^++\cN^-)
		\cdot D_tF\rd\bx \\
		:= & I_1+I_2+I_3+I_4+I_5+I_6.
	\end{split}
\end{equation}

For $ I_1 $ in \eqref{eq: dt E1}, it is direct to verify that
\begin{equation}\label{ineq: I1}
	I_1\leq C(M)\BL\norm{D_tF}_{L^2(\TT)}^2
	+\norm{\Tslam F}_{L^2(\TT)}^2\BR
	\leq C(M).
\end{equation}
For $ I_2 $ in \eqref{eq: dt E1}, it follows from Lemma \ref{lem: symbol calculus} that
\begin{equation}\label{ineq: I2}
	I_2\leq C(M)
	\norm{(\Tslam^*\Tslam-\Tlam)F}_{L^2(\TT)}
	\norm{D_tF}_{L^2(\TT)}
	\leq C(M).
\end{equation}
For $ I_3 $ in \eqref{eq: dt E1}, an application of Lemma \ref{lem: commutator Dt} gives
\begin{equation}\label{ineq: I3}
	\ba
	I_3
	= & \iT [\fa ,\,\Tslam] D_tF
	\cdot\Tslam F  \rd\bx
	+\iT \fa[D_t,\,\Tslam]F
	\cdot\Tslam F  \rd\bx \\
	\leq & \norm{[\fa,\,\Tslam]D_tF}_{L^2(\TT)}
	\norm{\Tslam F}_{L^2(\TT)} \\
	& +\norm{\fa}_{L^{\infty}(\TT)}
	\norm{[D_t,\,\Tslam]F}_{L^2(\TT)}
	\norm{\Tslam F}_{L^2(\TT)} \\
	\leq & C(M).
	\ea
\end{equation}
Similarly, 
\begin{equation}\label{ineq: I4}
	\ba
	I_4
	= & -\iT [\Ups^{\ka-\frac{3}{2}},\,D_t]D_t  \wb{\p}_if\cdot D_tF\rd\bx 
	-\iT D_t[\Ups^{\ka-\frac{3}{2}},\,D_t]  \wb{\p}_if
	\cdot D_tF\rd\bx  \\
	\leq & 
	\norm{[\Ups^{\ka-\frac{3}{2}},\,D_t]D_t\bp_if}_{L^2(\TT)}\norm{D_tF}_{L^2(\TT)} \\
	& +\norm{D_t[\Ups^{\ka-\frac{3}{2}},\,D_t]\bp_if}_{L^2(\TT)}
	\norm{D_tF}_{L^2(\TT)} \\
	\leq & C(M).
	\ea
\end{equation}
For $ I_5 $ in \eqref{eq: dt E1}, there holds that
\begin{equation}\label{ineq: I5}
	\ba
	I_5
	= & -\iT [\Ups^{\ka-\frac{3}{2}},\,\fa ]  T_{\lam}\wb{\p}_if\cdot D_tF\rd\bx
	-\iT \fa[\Ups^{\ka-\frac{3}{2}},\,T_{\lam}]  \wb{\p}_if\cdot D_tF\rd\bx \\
	\leq & \norm{[\Ups^{\ka-\frac{3}{2}},\,\fa]\Tlam\bp_if}_{L^2(\TT)}
	\norm{D_tF}_{L^2(\TT)} \\
	& +\norm{\fa[\Ups^{\ka-\frac{3}{2}},\,\Tlam]\bp_if}_{L^2(\TT)}
	\norm{D_tF}_{L^2(\TT)} \\
	\leq & C(M).
	\ea
\end{equation}
Next we estimate $ I_6 $ with $ \cN\ppm $ given by \eqref{def: cN}. Since $ \bp_ip=\p_ip+\Hf(\bp_if)\p_3 p $, it can be derived from \eqref{def: harmonic extension} and \eqref{eq: bpi p =0} that
\begin{equation}\label{eq: bpi p elliptic}
\begin{cases}
	\Lap\bp_ip\ppm=\p_i\Lap p\ppm+\Hf\ppm(\bp_if)\p_3\Lap p\ppm
	+2\nabla\Hf\ppm(\bp_if)\cdot\nabla\p_ip\ppm, 
	& \text{in $ \Om\ppm $}, \\
	\bp_ip\ppm=0,
	& \text{on $ \Ga_f $}, \\
	\p_3\bp_ip\ppm=0, 
	& \text{on $ \Ga\ppm $}.
\end{cases}
\end{equation}
Then we can use the elliptic system \eqref{eq: elliptic p}, the estimates \eqref{estim: f} and \eqref{estim: p} to get that
\begin{equation}\label{ineq: Lap bpi p}
	\norm{\Lap\bp_i p\ppm}_{H^{\ka-2}(\Om\ppm)}
	\leq C(M).
\end{equation}
Thus, \eqref{ineq: Lap bpi p} can be applied to \eqref{eq: bpi p elliptic} to yield that
\begin{equation}\label{ineq: bpi p}
	\norm{\nabla_N\bp_ip\ppm}_{H^{\ka-\frac{3}{2}}(\Ga_f)}
	\leq C(M)\norm{\Lap\bp_i p\ppm}_{H^{\ka-2}(\Om\ppm)}
	\leq C(M).
\end{equation}
Therefore, we have
\begin{equation*}
	\norm{\cN\ppm}_{H^{\ka-\frac{3}{2}}(\TT)}
	\leq C(M)
	\norm{(\nabla_N\bp_ip\ppm,\,\bp\rho\ppm,\,\bp u\ppm,\,D_t\bp_if)}_{H^{\ka-\frac{3}{2}}(\TT)}
	\leq C(M).
\end{equation*}
Thus, 
\begin{equation}\label{ineq: I6}
	I_6\leq C(M). 
\end{equation}

Combining all the estimates \eqref{ineq: I1}--\eqref{ineq: I6}, we have
\begin{equation}\label{ineq: energy: f H ka}
	\frac{\rd}{\rd t}\norm{(D_t\bp_if,\,\fa^{1/2}\Tslam\bp_if)}_{H^{\ka-\frac{3}{2}}(\TT)}
	\leq  C(M).
\end{equation}
As for $ \norm{f}_{L^2(\TT)} $, we use \eqref{eq: f: Dt f} to get that
\begin{equation}\label{ineq: energy: f L2}
	\ba
	\frac{1}{2}\frac{\rd}{\rd t}\norm{f}_{L^2(\TT)}^2
	=\iT D_t f\cdot f\rd\bx
	+\iT\frac{\bp_ju_j}{2}|f|^2\rd\bx
	\leq C(M).
	\ea
\end{equation}
Assuming the Taylor sign condition, \eqref{ineq: energy: f H ka}--\eqref{ineq: energy: f L2} and \eqref{eq: fa to taylor sign} prove \eqref{estim: f}.

\section{Full estimates of $ (p,\,u) $}\label{sec: full p u}
In this section, we shall recover the full estimates of $ (p,\,u) $ from the tangential energy estimates in Section \ref{sec: basic} by some elliptic estimates. 

\subsection{Full estimates of $ p $}

To recover the full estimates of the pressure $ p $ from the tangential estimates of $ D_t^lp $ in Section \ref{sec: basic}, we shall rewrite the wave equation of $ p $ in \eqref{eq: wave of p} as
\begin{equation}\label{eq: elliptic p}
	\begin{cases}
		\Lap p\ppm=\frac{\rho\ppm}{\ga p\ppm}D_t^2p\ppm
		-\rho\ppm\tr(\nabla u\ppm)^2
		+\cM_p\ppm,
		& \text{in $ \Om\ppm $}, \\
		p\ppm=q(t),
		& \text{on $ \Ga_f $}, \\
		\p_3p\ppm=0, 
		& \text{on $ \Ga\ppm $},
	\end{cases}
\end{equation}
where $ q(t) $ is given by \eqref{def: q} and
\begin{equation}\label{def: cM p}
	\cM_p\ppm=-\frac{\rho\ppm}{\ga (p\ppm)^2}(D_tp\ppm)^2
	+\frac{1}{\rho\ppm}\nabla\rho\ppm\cdot\nabla p\ppm.
\end{equation}

\begin{prop}\label{prop: full p}
	For $ t\in[0,\,T] $, there holds that
	\begin{equation}\label{estim: p}
		\norm{p}_{H^{\ka}(\Om)}
		+\sum_{l=1}^{\ka+1}
		\norm{D_t^lp}_{H^{\ka+1-l}(\Om)}
		\leq C(M_0)+TC(M).
	\end{equation}
	Furthermore, 
	\begin{equation}\label{estim: Dtu}
		\norm{D_tu}_{H^{\ka-1}(\Om)}
		+\sum_{l=2}^{\ka+1}\norm{D_t^lu}_{H^{\ka+1-l}(\Om)}
		\leq C(M_0)+TC(M).
	\end{equation}
%	\begin{equation}\label{estim: Dtf}
%		\norm{D_t^2f}_{H^{\ka-\frac{3}{2}}(\TT)}
%		+\sum_{l=3}^{\ka+1}\norm{D_t^lf}_{H^{\ka+\frac{3}{2}-l}(\TT)}
%		\leq C(M_0)+TC(M).
%	\end{equation}
\end{prop}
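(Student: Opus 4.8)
\textbf{Proof proposal for Proposition \ref{prop: full p}.} The plan is to close the estimates by a finite induction on the number $l$ of material derivatives, running \emph{downward} in the spatial Sobolev index, and using the elliptic problem \eqref{eq: elliptic p} as the workhorse. First I would establish the base case: by Proposition \ref{prop: p u tangential} we already control $\norm{D_t^l p}_{L^2(\Om)}$ for $0\leq l\leq\ka+1$, so in particular $\norm{D_t^{\ka+1}p}_{L^2(\Om)}\leq C(M_0)+TC(M)$, which is exactly \eqref{estim: p} at the top value $l=\ka+1$. For the inductive step, fix $l$ with $0\leq l\leq\ka$ and suppose $\norm{D_t^{l+1}p}_{H^{\ka-l}(\Om)}\leq C(M_0)+TC(M)$. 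Applying $D_t^l$ to \eqref{eq: elliptic p} gives an elliptic system for $D_t^l p\ppm$ in $\Om\ppm$ of the form
\begin{equation*}
	\begin{cases}
		\Lap (D_t^l p\ppm)=\dfrac{\rho\ppm}{\ga p\ppm}D_t^{l+2}p\ppm
		+\cR_p^{l,\pm},
		& \text{in $ \Om\ppm $}, \\
		D_t^l p\ppm=D_t^l q(t)+\cB_p^{l,\pm},
		& \text{on $ \Ga_f $}, \\
		\p_3 D_t^l p\ppm=\cC_p^{l,\pm},
		& \text{on $ \Ga\ppm $},
	\end{cases}
\end{equation*}
where $\cR_p^{l,\pm}$ collects $[\Lap,D_t^l]p\ppm$, $D_t^l\big(\rho\ppm\tr(\nabla u\ppm)^2\big)$, $D_t^l\cM_p\ppm$ and the commutator $[\rho\ppm/(\ga p\ppm),D_t^l]D_t^2 p\ppm$, while $\cB_p^{l,\pm}$ and $\cC_p^{l,\pm}$ come from commuting $D_t^l$ past the boundary operators (recalling $D_t$ is tangential to $\Ga_f$ by \eqref{eq: f: Dt f} and, by \eqref{eq: bdry: Ga pm}, $D_t$ interacts with $\Ga\ppm$ only through the tangential components of $u$). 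Then I would invoke the elliptic regularity estimate from Appendix \ref{sec: elliptic} together with Lemma \ref{lem: harmonic coord.} to obtain
\begin{equation*}
	\norm{D_t^l p\ppm}_{H^{\ka+1-l}(\Om\ppm)}
	\leq C\big(\norm{f}_{H^{\ka}}\big)\Big(
	\norm{D_t^{l+2}p\ppm}_{H^{\ka-1-l}(\Om\ppm)}
	+\norm{\cR_p^{l,\pm}}_{H^{\ka-1-l}(\Om\ppm)}
	+\norm{D_t^l q}_{H^{\ka+\frac12-l}(\TT)}
	+\text{(boundary terms)}\Big).
\end{equation*}
The first term on the right is bounded by the inductive hypothesis (it is one material derivative and one spatial degree cheaper than what we are estimating — note $D_t^{l+2}p=D_t^{(l+1)+1}p$ with spatial index $\ka-1-l=(\ka-l)-1\le\ka-l$), the scalar $D_t^l q(t)$ is controlled from the trace of $D_t^l p$ on $\Ga_f$ and Proposition \ref{prop: p u tangential}, and every term in $\cR_p^{l,\pm}$ and in the boundary corrections is a product of factors each controlled either by the inductive hypothesis, by Proposition \ref{prop: low order}, by Proposition \ref{prop: f}, or (for the velocity factors) by the already-established bound $\norm{u}_{H^\ka(\Om)}+\norm{D_tu}_{H^{\ka-1}(\Om)}\le C(M)$ coming from Propositions \ref{prop: low order}, \ref{prop: om & S} and \ref{prop: f}; here Lemma \ref{lem: harmonic coord.}(3) handles the products. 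This closes the induction and yields \eqref{estim: p}.

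Once \eqref{estim: p} is in hand, \eqref{estim: Dtu} is immediate: from the momentum equation $\rho\ppm D_t u\ppm=-\nabla p\ppm$ we get $D_tu\ppm=-\rho^{-1,\pm}\nabla p\ppm$, so $\norm{D_tu}_{H^{\ka-1}(\Om)}\le C(M)\norm{\nabla p}_{H^{\ka-1}(\Om)}\le C(M_0)+TC(M)$, using $c_0\le\rho\le C_0$ (propagated via Proposition \ref{prop: low order} and the equation of state \eqref{eq: rho by p and S}) and Lemma \ref{lem: harmonic coord.}(3). For $2\le l\le\ka+1$, applying $D_t^{l-1}$ gives $D_t^l u\ppm=-D_t^{l-1}\big(\rho^{-1,\pm}\nabla p\ppm\big)$, and the right-hand side lands in $H^{\ka+1-l}(\Om)$ by \eqref{estim: p}, Proposition \ref{prop: low order}, and the product estimate, distributing the $D_t^{l-1}$ by the Leibniz rule and bounding each factor as above. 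This gives \eqref{estim: Dtu}.

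The main obstacle I anticipate is \emph{bookkeeping the boundary terms} $\cB_p^{l,\pm}$, $\cC_p^{l,\pm}$ and ensuring the Dirichlet data $D_t^l q(t)$ is genuinely under control at the right regularity: since the Dirichlet condition on $\Ga_f$ is $p\ppm=q(t)$ with $q$ only a function of time, commuting $D_t^l$ across it produces terms involving $D_t^l$ acting on the \emph{geometry} of $\Ga_f$ (through the fact that $D_t$ is tangential only up to lower-order corrections built from $\Hf_f\ppm(\p_j f)$), and one must verify these are absorbed into $TC(M)$ using Proposition \ref{prop: f} and the paralinearization of the DN operator — this is the step where loss of regularity would show up if the interface estimate in Proposition \ref{prop: f} were not sharp. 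A secondary technical point is that the elliptic estimate must be applied on $\Om\ppm$ with the mixed Dirichlet (on $\Ga_f$) / Neumann (on $\Ga\ppm$) boundary conditions, so one must cite the version in Appendix \ref{sec: elliptic} tailored to this configuration and track the dependence of its constant on $\norm{f}_{H^\ka}$ through the harmonic change of coordinates $\Phi_f\ppm$.
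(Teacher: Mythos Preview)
Your overall strategy---downward induction on $l$ using the elliptic problem \eqref{eq: elliptic p}---matches the paper's, but there is a genuine gap at the step $l=\ka$. The elliptic equation for $D_t^\ka p$ has source term $\frac{\rho}{\ga p}D_t^{\ka+2}p$, and $D_t^{\ka+2}p$ is not contained in the energy $\cE$ (which stops at $D_t^{\ka+1}p$), so neither the base case nor the inductive hypothesis can supply it. The paper handles this level separately: from the momentum equation $D_tu=-\frac{1}{\rho}\nabla p$ one writes
\[
\nabla D_t^\ka p = -\rho\, D_t^{\ka+1}u - \rho\,[D_t^\ka,\tfrac{1}{\rho}]\nabla p - [D_t^\ka,\nabla]p,
\]
and $\norm{D_t^{\ka+1}u}_{L^2(\Om)}$ is already available from Proposition~\ref{prop: p u tangential}, while the commutators are controlled by $C(\cF)$. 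This yields $\norm{D_t^\ka p}_{H^1(\Om)}\le C(M_0)+TC(M)$ directly, after which your elliptic argument runs cleanly for $1\le l\le\ka-1$.

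Two smaller points. First, at $l=0$ your scheme would aim for $\norm{p}_{H^{\ka+1}(\Om)}$, but the interface is only in $H^\ka$, so the elliptic estimate can only return $\norm{p}_{H^\ka(\Om)}$---which is all that \eqref{estim: p} claims. Second, you over-worry about the Dirichlet boundary correction on $\Ga_f$: since $D_t$ is \emph{exactly} tangential to $\Ga_f$ by \eqref{eq: Dt} (not merely ``up to lower-order corrections''), we have $(D_t^l p)\big|_{\Ga_f}=D_t^l\big(p\big|_{\Ga_f}\big)=\p_t^l q(t)$, a function of $t$ alone, so the correction vanishes identically and its $H^{\ka+\frac12-l}(\Ga_f)$ norm reduces to an $L^2$ norm controlled by the trace and $C(\cF)$. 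Your derivation of \eqref{estim: Dtu} from the momentum equation is correct and matches the paper.
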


\begin{proof}

To prove \eqref{estim: p}, we shall use an induction over the index $ l $. 
When $ l=\ka+1 $, it follows from \eqref{estim: Dtl(p, u)} that
\begin{equation}\label{ineq: full p induction: ka+1}
	\norm{(D_t^{\ka+1}p,\,D_t^{\ka+1}u)}_{L^2(\Om)}
	\leq C(M_0)+TC(M). 
\end{equation}
For $ D_t^{\ka}p $, since
\begin{equation*}
	\nabla D_t^{\ka}p
	=\rho D_t^{\ka}(\frac{\nabla p}{\rho})
	-\rho[D_t^{\ka},\,\frac{1}{\rho}]\nabla p
	-[D_t^{\ka},\,\nabla]p,
\end{equation*}
we have from the momentum equation $ D_tu=-\frac{\nabla p}{\rho} $ that
\begin{equation*}
\ba
	\norm{D_t^{\ka}p}_{H^1(\Om)}
	\leq & \norm{\nabla D_t^{\ka}p}_{L^2(\Om)}
	+\norm{D_t^{\ka}p}_{L^2(\Om)} \\
	\leq & \norm{\rho D_t^{\ka+1}u}_{L^2(\Om)}
	+\norm{\rho[D_t^{\ka},\,\frac{1}{\rho}]\nabla p}_{L^2(\Om)} \\
	& +\norm{[D_t^{\ka},\,\nabla]p}_{L^2(\Om)}
	+\norm{D_t^{\ka}p}_{L^2(\Om)} \\
	\leq & \norm{D_t^{\ka+1}u}_{L^2(\Om)}+C(\cF).
\ea
\end{equation*}
Thus, by \eqref{estim: cF} and \eqref{ineq: full p induction: ka+1}, we have
\begin{equation}\label{ineq: full p induction: ka}
	\norm{D_t^{\ka}p}_{H^1(\Om)}
	\leq C(M_0)+TC(M). 
\end{equation}

Assume that $ 1\leq l\leq\ka-1 $ and 
\begin{equation}\label{ineq: full p induction assump}
	\sum_{k=l+1}^{\ka+1}\norm{D_t^kp}_{H^{\ka+1-k}(\Om)}
	\leq C(M_0)+TC(M).
\end{equation}
Then we shall prove that
\begin{equation}\label{ineq: full p induction}
	\norm{D_t^lp}_{H^{\ka+1-l}(\Om)}
	\leq C(M_0)+TC(M).
\end{equation}

When $ 1\leq l\leq\ka-1 $, the equation for $ D_t^lp $ is
\begin{equation}\label{eq: elliptic Dtl p}
	\begin{cases}
		\Lap D_t^lp=\frac{\rho}{\ga p}D_t^{l+2}p
		+[\Lap,\,D_t^l]p
		-[\frac{\rho}{\ga p},\,D_t^l]D_t^2p \\
		\quadd\quadd
		-D_t^l\BL\rho\tr(\nabla u)^2\BR
		+D_t^l\cM_p,
		& \text{in $ \Om\ppm $}, \\
		D_t^lp=\p_t^lq(t),
		& \text{on $ \Ga_f $}, \\
		\p_3 D_t^lp=[\p_3, D_t^l]p, 
		& \text{on $ \Ga\ppm $},
	\end{cases}
\end{equation}
Since
\begin{equation*}
\ba
	[\Lap,\,D_t^l]p
	= & \sum_{m=0}^{l-1}D_t^m[\Lap,\,D_t]D_t^{l-1-m}p \\
	= & \sum_{m=0}^{l-1}D_t^m\BL\Lap u_J\p_J D_t^{l-1-m}p
	+2\nabla u_J \p_J D_t^{l-1-m}p\BR \\
	= & \sum_{m=0}^{l-1}\sum_{n=0}^{m}
	\BL D_t^n\Lap u_J\cdot D_t^{m-n}\p_J D_t^{l-1-m}p
	+2D_t^n\nabla u_J\cdot D_t^{m-n}\p_J D_t^{l-1-m}p\BR,
\ea
\end{equation*}
\begin{equation*}
\ba
	-[\frac{\rho}{\ga p},\,D_t^l]D_t^2p
	= & \sum_{m=0}^{l-1}D_t^m[D_t,\,\frac{\rho}{\ga p}]D_t^{l+1-m}p \\
	= & \sum_{m=0}^{l-1}D_t^m\BL D_t(\frac{\rho}{\ga p})D_t^{l+1-m}p\BR \\
	= & \sum_{n=0}^{l-1}D_t^{n+1}(\frac{\rho}{\ga p})
	\cdot D_t^{l+1-n}p,
\ea
\end{equation*}
\begin{equation*}
\ba
	D_t^l(\rho\tr(\nabla u)^2)
	= & D_t^l(\rho \p_Ju_K \p_K u_J) \\
	= & \sum_{m=0}^{l}\sum_{n=0}^{l-m}
	D_t^{l-m-n}\rho\cdot D_t^m\p_Ju_K\cdot D_t^n\p_K u_J,
\ea
\end{equation*}
we have
\begin{equation}\label{estim: full p: Lap Dtlp}
	\norm{\Lap D_t^lp}_{H^{\ka-1-l}(\Om)}
	\leq C(\cF)\norm{D_t^{l+2}p}_{H^{\ka-1-l}(\Om)}
	+C(\cF).
\end{equation}
Similarly, 
\begin{equation*}
\ba
	[\p_3,\,D_t^{l}]p
	= & \sum_{m=0}^{l-1}D_t^m[\p_3,\,D_t]D_t^{l-1-m}p \\
	= & \sum_{m=0}^{l-1}D_t^m\BL\p_3u_J \p_JD_t^{l-1-m}p\BR \\
	= & \sum_{m=0}^{l-1}\sum_{n=0}^{m}
	D_t^n\p_3u_J\cdot D_t^{m-n}\p_JD_t^{l-1-m}p,
\ea
\end{equation*}
yields that
\begin{equation}\label{estim: full p: p3 Dtlp}
	\norm{\p_3 D_t^lp}_{H^{\ka-\frac{1}{2}-l}(\Ga\ppm)}
	\leq C(\cF).
\end{equation}
On the interface $ \Ga_f $, the fact that $ p|_{\Ga_f}=q(t) $ which is independent on $ \bx $ infers that
\begin{equation}\label{estim: full p: Dtlp Gaf}
\ba
	\norm{D_t^lp}_{H^{\ka+\frac{1}{2}-l}(\Ga_f)}
	= & \norm{D_t^lp}_{L^2(\Ga_f)} \\
	\leq & C(\norm{f}_{H^{\ka-\frac{1}{2}}})
	\norm{D_t^lp}_{H^1(\Om)}
	\leq C(\norm{f}_{H^{\ka-\frac{1}{2}}})C(\cF).
\ea
\end{equation}
Therefore, the standard elliptic theory can be applied to \eqref{eq: elliptic Dtl p} to get that
\begin{equation}\label{estim: Dtl p}
	\ba
		\norm{D_t^lp}_{H^{\ka+1-l}(\Om)}
		\leq & C(\norm{f}_{H^{\ka-\frac{1}{2}}})
		\BL\norm{\Lap D_t^l p}_{H^{\ka-1-l}(\Om)} \\
		&+\norm{\p_3D_t^lp}_{H^{\ka-\frac{1}{2}-l}(\Ga\ppm)}
		+\norm{D_t^lp}_{H^{\ka+\frac{1}{2}-l}(\Ga_f)}\BR \\
		\leq  & C(M_0)+TC(M).
	\ea
\end{equation}
where we have used \eqref{estim: full p: Lap Dtlp}--\eqref{estim: full p: Dtlp Gaf}, the induction assumption \eqref{ineq: full p induction assump}, \eqref{estim: cF} and \eqref{estim: f}.

The case of $ l=0 $ follows in a similar way. Notice that we can only get $ \norm{p}_{H^{\ka}(\Om)} $ instead of $ \norm{p}_{H^{\ka+1}(\Om)} $ due to limited regularity of the interface $ f $.  Thus, \eqref{estim: p} is proved.

To prove \eqref{estim: Dtu}, since $ D_tu=-\frac{\nabla p}{\rho} $ and 
\begin{equation*}
	D_t^{l+1}u=-D_t^l(\frac{1}{\rho}\nabla p)
	=-\frac{1}{\rho}\nabla D_t^l p
	-[D_t^l,\,\frac{1}{\rho}\nabla]p,
\end{equation*}
the estimates of $ u $ in \eqref{estim: Dtu} follow from \eqref{estim: p} and \eqref{estim: cF}. 

%As for the estimates of the interface $ f $, \eqref{estim: Dtu} and the fact that $ D_t^{l+1}f=D_t^lu_3|_{\Ga_f} $ imply \eqref{estim: Dtf}.

\end{proof}

\subsection{Full estimates of $ u $}

The full estimates of $ u $ can be recovered by Lemma \ref{lem: elliptic u} and the fact that
\begin{equation}\label{eq: Dtbpi f}
	\bp_iu\cdot N=\bp_i D_tf-\bp_jf\bp_iu_j
	=D_t\bp_if,
	\quadd i=1,\,2.
\end{equation} 

\begin{prop}\label{prop: full u}
	For $ t\in[0,\,T] $, there holds that
	\begin{equation}\label{estim: u Hk}
		\norm{u}_{H^{\ka}(\Om)}
		\leq C(M_0)+TC(M).
	\end{equation}
\end{prop}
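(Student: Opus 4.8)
The plan is to recover the full $H^{\ka}(\Om)$ norm of $u$ from the already-established estimates on the vorticity $\om=\cur u$, the divergence $\dvg u$, the boundary data on $\Ga\ppm$, and the normal trace on $\Ga_f$, by invoking an elliptic (div-curl) estimate. First I would write down the relevant elliptic system: on $\Om\ppm$ the velocity $\upm$ satisfies $\curl \upm=\om\ppm$ and $\dvg\upm$ is controlled by the continuity equation $\dvg u=-\frac{1}{\ga p}D_tp$; on the fixed boundaries $\Ga\ppm$ we have $\upm\cdot n\ppm=0$ from \eqref{eq: bdry: Ga pm}; and on the interface $\Ga_f$ the normal component $\bp_iu\cdot N=D_t\bp_if$ by \eqref{eq: Dtbpi f}, which together with the tangential derivatives of $f$ pins down the boundary trace of $u\cdot N$ on $\Ga_f$ up to a constant (indeed $u\cdot N|_{\Ga_f}$ relates to $D_tf=u_3$). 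The point is that Lemma~\ref{lem: elliptic u} (from Appendix~\ref{sec: elliptic}) provides precisely the a priori estimate
\[
	\norm{\upm}_{H^{\ka}(\Om\ppm)}
	\les C\big(\norm{f}_{H^{\ka}}\big)\BL
	\norm{\curl\upm}_{H^{\ka-1}(\Om\ppm)}
	+\norm{\dvg\upm}_{H^{\ka-1}(\Om\ppm)}
	+\norm{u\cdot N}_{H^{\ka-\frac12}(\Ga_f)}
	+\norm{\upm}_{L^2(\Om\ppm)}\BR.
\]

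The key steps, in order: (i) bound $\norm{\om}_{H^{\ka-1}(\Om)}$ by Proposition~\ref{prop: om & S}; (ii) bound $\norm{\dvg u}_{H^{\ka-1}(\Om)}=\norm{\frac{1}{\ga p}D_tp}_{H^{\ka-1}(\Om)}$ using the continuity equation, the lower bound $p\geq c_0$, Lemma~\ref{lem: harmonic coord.}(3) for the product, and the pressure estimate \eqref{estim: p} (which controls $D_tp$ in $H^{\ka-1}$); (iii) bound the normal trace on $\Ga_f$: from \eqref{eq: Dtbpi f}, $\bp_i(u\cdot N)$ up to lower order equals $D_t\bp_if$, and $D_tf=u_3|_{\Ga_f}$, so $\norm{u\cdot N}_{H^{\ka-\frac12}(\Ga_f)}$ is controlled by $\norm{D_tf}_{H^{\ka-\frac12}(\TT)}+\norm{f}_{H^{\ka}(\TT)}+\norm{u}_{H^{\ka-1}(\Om)}$-type quantities, all bounded by \eqref{estim: f} and Proposition~\ref{prop: low order}; (iv) bound $\norm{u}_{L^2(\Om)}$ by \eqref{estim: Dtl(p, u)} with $l=0$; (v) assemble these into Lemma~\ref{lem: elliptic u} to conclude $\norm{u}_{H^{\ka}(\Om)}\leq C(M_0)+TC(M)$. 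Since all the ingredient bounds are of the form $C(M_0)+TC(M)$ (or $M_0+TC(M)$), and the elliptic constant depends only on $\norm{f}_{H^{\ka}}$ which by \eqref{estim: f} is itself $\leq M_0+TC(M)$, absorbing it into $C(M)$ gives the stated estimate.

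The main obstacle is the boundary trace on $\Ga_f$ in step (iii): we need $u\cdot N$ in $H^{\ka-\frac12}(\Ga_f)$, i.e.\ essentially $\ka-\frac12$ tangential derivatives, but the interface $f$ itself is only controlled in $H^{\ka}(\TT)$ and $D_tf$ only in $H^{\ka-\frac12}(\TT)$, so the argument must be arranged so that the top-order tangential derivative of $u\cdot N$ is expressed exactly through $D_t\bp_if$ (which is at the level of $\bp\cdot(D_tf)$, hence $H^{\ka-\frac32}$ is safely available from \eqref{estim: f}) rather than through a genuine $H^{\ka-\frac12}$ derivative of $u$ that we do not yet have. In other words, one must use the identity \eqref{eq: Dtbpi f} to trade a normal-component derivative of $u$ on $\Ga_f$ for a time-derivative of the interface, and only the lower-order pieces $\bp_jf\,\bp_iu_j$ are handled by the already-available $H^{\ka-1}(\Om)$ trace of $u$ (giving $H^{\ka-\frac32}(\Ga_f)$, consistent). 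Closing this bookkeeping — matching the regularity indices so nothing exceeds what Sections~\ref{sec: basic}--\ref{sec: full p u} already deliver — is the crux; the div-curl elliptic estimate itself is standard.
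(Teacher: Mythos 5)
Your approach is essentially the paper's: apply the div-curl elliptic estimate of Lemma~\ref{lem: elliptic u}, feed in the vorticity bound from Proposition~\ref{prop: om & S}, the divergence from the continuity equation, the $L^2$ bound from Proposition~\ref{prop: p u tangential}, and close the interface trace via the identity \eqref{eq: Dtbpi f} and Proposition~\ref{prop: f}. One correction to your displayed estimate: Lemma~\ref{lem: elliptic u} controls the interface boundary data through $\norm{\bp_i u\cdot N}_{H^{\ka-\frac32}(\Ga_f)}$, not $\norm{u\cdot N}_{H^{\ka-\frac12}(\Ga_f)}$. The ``half-derivative'' gain you are implicitly invoking in the latter form is Lemma~\ref{lem: div-curl}, but that lemma is restricted to $s\in[2,\ka]$ and only yields $u\in H^{s-1}\subseteq H^{\ka-1}$, which is one order short. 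The refined Lemma~\ref{lem: elliptic u} with the tangential-derivative trace is precisely what makes the step work, and you clearly understand this, since your closing paragraph identifies exactly the right mechanism: trade the top-order tangential derivative of $u\cdot N$ for $D_t\bp_if$ via \eqref{eq: Dtbpi f}, which lands in $H^{\ka-\frac32}(\TT)$ by \eqref{estim: f}. So the idea is right, but the cited inequality should be \eqref{ineq: elliptic: u} in the form with $\bp_i u\cdot N$, as in the paper's proof.
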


\begin{proof}
	We apply \eqref{ineq: elliptic: u} and \eqref{eq: Dtbpi f} to get that
	\begin{equation*}
	\ba
		\norm{u}_{H^{\ka}(\Om)}
		\leq & C(\norm{f}_{H^{\ka-\frac{1}{2}}})
		\BL\norm{\cur u}_{H^{\ka-1}(\Om)}
		+\norm{\dvg u}_{H^{\ka-1}(\Om)}
		\\ & \quadd\quadd 
		+\norm{\bp_iu\cdot N}_{H^{\ka-\frac{3}{2}}(\TT)}
		+\norm{u}_{L^2(\Om)}\BR \\
		\leq & C(\norm{f}_{H^{\ka-\frac{1}{2}}})
		\BL\norm{\om}_{H^{\ka-1}(\Om)}
		+\norm{\frac{1}{\ga p}D_tp}_{H^{\ka-1}(\Om)}
		\\ & \quadd\quadd 
		+\norm{D_t\bp_if}_{H^{\ka-\frac{3}{2}}(\TT)}
		+\norm{u}_{L^2(\Om)}\BR \\
		\leq & 
		 C\Big(C(M_0)+TC(M)\Big)\cdot \BL C(M_0)+TC(M)\BR \\
		\leq & C(M_0)+TC(M),
	\ea
	\end{equation*}
	where we have used \eqref{estim: cF}, \eqref{estim: Dtl(p, u)}, \eqref{estim: om} and \eqref{estim: f}. 
\end{proof}

\appendix

\section{Elliptic estimates}\label{sec: elliptic}

For the one-phase elliptic system
\begin{equation}\label{eq: div-curl}
	\begin{cases}
		\cur u=\om, \quad \dvg u=\sigma,
		& \text{in $ \Om_f^- $}, \\
		u\cdot N=\theta,
		& \text{on $ \Ga_f $}, \\
		u\cdot n_-=0, \quad
		\iTT u_j\rd\bx=\al_j \,(j=1,\,2),
		& \text{on $ \Ga^- $}, \\
	\end{cases}
\end{equation}
we have the following existence result given by Proposition 5.1 in \cite{MR3745155} (see also \cite{MR3685967,MR2388661}):
\begin{lem}\label{lem: div-curl}
	Assume that $ f\in H^{\ka-\frac{1}{2}}(\TT) $ with $ \ka>\frac{5}{2} $. For $ s\in[2,\,\ka] $, let $ (\om,\,\sigma)\in H^{s-2}(\Om_f^-) $ and $ \theta\in H^{s-\frac{3}{2}}(\TT) $ be such that
	\begin{equation*}
		\int_{\Om_f^-}\sigma\rd x
		=\int_{\TT}\theta\rd\bx,
	\end{equation*}
	\begin{equation*}
		\dvg\om=0, \,\text{in $ \Om_f^- $},
		\quad
		\int_{\Ga^-}\om_3\rd\bx=0.
	\end{equation*}
	Then there exists a unique $ u\in H^{s-1}(\Om_f^-) $ to the system \eqref{eq: div-curl} such that
	\begin{equation}\label{ineq: elliptic u -1}
		\norm{u}_{H^{s-1}(\Om_f^-)}
		\leq C(\norm{f}_{H^{\ka-\frac{1}{2}}})
		\BL\norm{(\om,\,\sigma)}_{H^{s-2}(\Om_f^-)}
		+\norm{\theta}_{H^{s-\frac{3}{2}}(\Ga_f)}
		+|\al_1|+|\al_2|\BR.
	\end{equation}
\end{lem}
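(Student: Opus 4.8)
The statement is Proposition~5.1 of \cite{MR3745155} (see also \cite{MR3685967,MR2388661}), so I will only outline the strategy. The plan is a Hodge-type decomposition: split $u$ into a gradient part carrying the prescribed divergence and normal data, and a divergence-free part with prescribed curl and vanishing normal trace, and carry everything out after flattening the rough interface $\Ga_f$.

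First I would remove the inhomogeneous terms. Let $\phi$ solve the mixed problem $\Lap\phi=\sigma$ in $\Om_f^-$, $\nabla_N\phi=\theta$ on $\Ga_f$, $\p_3\phi=0$ on $\Ga^-$; its solvability condition is exactly the assumed identity $\int_{\Om_f^-}\sigma\,\rd x=\int_{\TT}\theta\,\rd\bx$. Transporting the equation to the fixed smooth reference domain $\Om_*^-$ through the harmonic diffeomorphism $\Phi_f^-$ and using standard elliptic regularity there (with the geometric coefficients controlled by $\norm{f}_{H^{\ka-1/2}}$), one gets $\norm{\nabla\phi}_{H^{s-1}(\Om_f^-)}\le C(\norm{f}_{H^{\ka-1/2}})\big(\norm{\sigma}_{H^{s-2}}+\norm{\theta}_{H^{s-3/2}}\big)$. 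Then $w:=u-\nabla\phi$ has to satisfy $\dvg w=0$, $\cur w=\om$ in $\Om_f^-$, $w\cdot N=0$ on $\Ga_f$, $w\cdot n_-=0$ on $\Ga^-$, and $\int_{\TT}w_j\,\rd\bx=\al_j$.

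Next I would solve for $w$. Since $\Om_f^-$ is diffeomorphic to $\TT^2\times(-1,0)$, whose first cohomology is two-dimensional, the space of divergence-free, curl-free fields tangent to $\p\Om_f^-$ is spanned by (the images of) the constant fields $e_1,e_2$, and is parametrized by the two circulations $\al_1,\al_2$. Thus a particular solution can be built from a vector potential $w_0=\cur B$ with the gauge $\dvg B=0$, reducing to $-\Lap B=\om$ with boundary conditions that make $\cur B$ tangent to $\p\Om_f^-$; the compatibility needed here is precisely $\dvg\om=0$ together with $\int_{\Ga^-}\om_3\,\rd\bx=0$ (which forces the net flux of $\om$ through each boundary component to vanish). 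Elliptic regularity gives $\norm{w_0}_{H^{s-1}}\le C(\norm{f}_{H^{\ka-1/2}})\norm{\om}_{H^{s-2}}$, and adding the unique combination $c_1e_1+c_2e_2$ to enforce $\int_{\TT}w_j\,\rd\bx=\al_j$ contributes $|\al_1|+|\al_2|$; uniqueness holds because the difference of two solutions is divergence-free, curl-free, tangent to the boundary with zero periods, hence vanishes. A potential-free alternative is to solve directly the elliptic system $-\Lap u=\cur\om-\nabla\sigma$ with Hodge-type boundary conditions ($u\cdot\nu$ and the tangential part of $\cur u$ prescribed on each component of $\p\Om_f^-$), and then check a posteriori, via $\dvg(\cur\om-\nabla\sigma)=-\Lap\sigma$ and $\cur(\cur\om-\nabla\sigma)=-\Lap\om$, that $\dvg u-\sigma$ and $\cur u-\om$ are harmonic and therefore vanish by the same compatibility conditions.

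The main obstacle is geometric, not algebraic: $\Ga_f$ carries only $H^{\ka-1/2}$ regularity, so none of the elliptic estimates apply on $\Om_f^-$ directly — everything must be pulled back through $\Phi_f^-$ to $\Om_*^-$, where $\Lap$, $\dvg$ and $\cur$ become variable-coefficient operators built from $\nabla\Phi_f^-$. The delicate point is to bound these coefficients and the commutators they create with tangential derivatives so that the final constant depends only on $\norm{f}_{H^{\ka-1/2}}$, with no loss of derivatives. Equivalently, the a priori bound \eqref{ineq: elliptic u -1} can be obtained from the classical Rellich/div-curl integration-by-parts identities applied to tangential derivatives of $u$ on $\Om_*^-$; combined with the construction above, this is exactly the content of Proposition~5.1 of \cite{MR3745155}.
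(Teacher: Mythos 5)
The paper offers no proof of this lemma; it simply cites Proposition~5.1 of~\cite{MR3745155} (with pointers to~\cite{MR3685967,MR2388661}), and you correctly recognized this. Your Hodge-type sketch --- a scalar potential to absorb the divergence and the normal trace on $\Ga_f$, a vector potential for the curl, a two-dimensional harmonic correction to fix the periods, all pulled back to $\Om_*^-$ via the harmonic diffeomorphism $\Phi_f^-$ so that elliptic regularity yields a constant depending only on $\norm{f}_{H^{\ka-\frac{1}{2}}}$ --- is a faithful outline of the standard route to such div-curl estimates, so there is no discrepancy of method to flag. One imprecision in the sketch is worth pointing out: the constant fields $e_1,e_2$ are tangent to the flat bottom $\Ga^-$ but not to the curved interface $\Ga_f$, since $e_i\cdot N=-\p_if$ is nonzero in general; consequently ``adding the unique combination $c_1e_1+c_2e_2$'' would destroy the boundary condition $w\cdot N=0$ on $\Ga_f$. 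The correct two-parameter family consists of divergence-free, curl-free fields on $\Om_f^-$ tangent to both boundary components and normalized by their periods; they are not literally $e_1,e_2$ (nor their naive pushforwards, since no single pullback preserves both closedness and co-closedness) and must themselves be produced by an auxiliary elliptic construction, as in the cited references. This is a small repair, not a change of strategy, and does not affect the rest of your outline.
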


The regularity of the solution of the one-phase elliptic system \eqref{eq: div-curl} was improved in \cite{MR3685967} (see also \cite{Wang-Zhang-Zhao-Mach}) by using tangential derivatives for the boundary condition on the surface $ \Ga_f $:
\begin{lem}\label{lem: elliptic u}
	Assume that $ f\in H^{\ka-\frac{1}{2}}(\TT) $ with $ \ka>\frac{5}{2} $.  
	For $ s\in[2,\,\ka] $, there holds that
	\begin{equation}\label{ineq: elliptic: u}
		\ba
		\norm{u}_{H^{s}(\Om_f^-)}
		\leq & C(\norm{f}_{H^{\ka-\frac{1}{2}}})\BL
		\norm{\cur u}_{H^{s-1}(\Om_f^-)}
		+\norm{\dvg u}_{H^{s-1}(\Om_f^-)} \\
		& \quadd
		+\sum_{i=1,2}\norm{\bp_i u\cdot N }_{H^{s-\frac{3}{2}}(\Ga_f)}
		+\norm{u}_{L^2(\Om_f^-)}\BR.
		\ea
	\end{equation}
\end{lem}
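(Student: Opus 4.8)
The plan is to deduce \eqref{ineq: elliptic: u} from the base div--curl estimate of Lemma \ref{lem: div-curl} by a tangential-derivative argument, together with the algebraic recovery of the normal derivative from $\curl u$ and $\dvg u$, and then to remove the resulting lower-order term by interpolation. Throughout, $u$ is understood to solve the system \eqref{eq: div-curl}, so in particular $u\cdot n_-=0$ on $\Ga^-$.

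First I would split the $H^s$-norm. Since $\Ga_f$ is of class $C^1$ for $f\in H^{\ka-\frac12}$ with $\ka\ge4$, one has $\norm{u}_{H^s(\Om_f^-)}\approx\norm{u}_{L^2(\Om_f^-)}+\sum_{J=1}^3\norm{\p_J u}_{H^{s-1}(\Om_f^-)}$, and by \eqref{def: tangential derivatives} one has $\p_j=\bp_j-\Hf_f^-(\p_j f)\p_3$ for $j=1,2$, with $\Hf_f^-(\p_j f)$ bounded in $H^{\ka-1}(\Om_f^-)\subset C^1$ by $\norm{f}_{H^{\ka-\frac12}}$. Hence it suffices to control $\norm{\bp_1 u}_{H^{s-1}}$, $\norm{\bp_2 u}_{H^{s-1}}$ and $\norm{\p_3 u}_{H^{s-1}}$. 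For the last one, the identities $(\curl u)_1=\p_2u_3-\p_3u_2$, $(\curl u)_2=\p_3u_1-\p_1u_3$ and $\dvg u=\p_1u_1+\p_2u_2+\p_3u_3$, after substituting $\p_j=\bp_j-\Hf_f^-(\p_j f)\p_3$, form a linear system for $\p_3u=(\p_3u_1,\p_3u_2,\p_3u_3)$ whose coefficient matrix is $I$ plus a positive semidefinite term, hence uniformly invertible; solving it expresses $\p_3u$ as a linear combination of $\bp_1u,\bp_2u,\curl u,\dvg u$ with $H^{\ka-1}(\Om_f^-)$ coefficients, whence
\[
\norm{\p_3u}_{H^{s-1}(\Om_f^-)}\le C\big(\norm{f}_{H^{\ka-\frac12}}\big)\Big(\norm{\bp_1u}_{H^{s-1}}+\norm{\bp_2u}_{H^{s-1}}+\norm{\curl u}_{H^{s-1}}+\norm{\dvg u}_{H^{s-1}}\Big)
\]
by the product estimates of Lemma \ref{lem: harmonic coord.}; the smoothness of the coefficients, which follows from $\ka\ge4$, makes this valid down to $s=2$.

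Next I would estimate the tangential derivatives. Fix $j\in\{1,2\}$ and set $v=\bp_j u$. Since $\bp_j$ is tangent to $\Ga^-$ and $n_-$ is constant, $v\cdot n_-=\bp_j(u\cdot n_-)=0$ on $\Ga^-$, so $v$ solves \eqref{eq: div-curl} with $\curl v=\bp_j(\curl u)+[\curl,\bp_j]u$, $\dvg v=\bp_j(\dvg u)+[\dvg,\bp_j]u$, boundary datum $\theta=\bp_j u\cdot N$ on $\Ga_f$, and $\al_i=\iTT v_i\,\rd\bx$. The three compatibility conditions required in Lemma \ref{lem: div-curl} hold automatically: $\dvg(\curl v)=0$ identically, $\int_{\Ga^-}(\curl v)_3\,\rd\bx=0$ by periodicity on $\TT$, and $\int_{\Om_f^-}\dvg v\,\rd x=\iTT v\cdot N\,\rd\bx$ by the divergence theorem using $v\cdot n_-=0$. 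The commutators $[\curl,\bp_j]$ and $[\dvg,\bp_j]$ are first--order differential operators with coefficients $\nabla\Hf_f^-(\p_j f)\in H^{\ka-2}(\Om_f^-)$ bounded by $\norm{f}_{H^{\ka-\frac12}}$, so $\norm{[\curl,\bp_j]u}_{H^{s-2}}+\norm{[\dvg,\bp_j]u}_{H^{s-2}}\le C(\norm{f}_{H^{\ka-\frac12}})\norm{u}_{H^{s-1}}$. Applying the estimate of Lemma \ref{lem: div-curl} at the index $s$ and absorbing $|\al_1|+|\al_2|\le\norm{v}_{L^2}\le\norm{u}_{H^{s-1}}$ then gives
\[
\norm{\bp_j u}_{H^{s-1}(\Om_f^-)}\le C\big(\norm{f}_{H^{\ka-\frac12}}\big)\Big(\norm{\curl u}_{H^{s-1}}+\norm{\dvg u}_{H^{s-1}}+\norm{\bp_j u\cdot N}_{H^{s-\frac32}(\Ga_f)}+\norm{u}_{H^{s-1}}\Big).
\]

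Finally I would combine the two displays with the norm splitting of the second paragraph to get $\norm{u}_{H^s(\Om_f^-)}\le C(\norm{f}_{H^{\ka-\frac12}})\big(\norm{\curl u}_{H^{s-1}}+\norm{\dvg u}_{H^{s-1}}+\sum_{i=1,2}\norm{\bp_i u\cdot N}_{H^{s-\frac32}(\Ga_f)}+\norm{u}_{H^{s-1}}\big)$, and then use the interpolation inequality $\norm{u}_{H^{s-1}}\le\delta\norm{u}_{H^s}+C_\delta\norm{u}_{L^2}$ with $\delta$ chosen small relative to $C(\norm{f}_{H^{\ka-\frac12}})$ to absorb $\delta\norm{u}_{H^s}$ into the left-hand side, which yields \eqref{ineq: elliptic: u}. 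The step I expect to require the most care is the bookkeeping of the error terms: each commutator and product in the first two steps must be shown to cost only $\norm{u}_{H^{s-1}}$ (never $\norm{u}_{H^s}$), since otherwise the final absorption fails; this uses that those commutators are genuinely first order in $u$ and that the coefficients lie in $H^{\ka-1}$, resp. $H^{\ka-2}$, with $\ka\ge4$, so that they act as multipliers on the relevant low-regularity Sobolev spaces, including the endpoint $s=2$. It should also be understood that the a priori estimate is first proved for sufficiently smooth $u$, so that all error terms are finite.
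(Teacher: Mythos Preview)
The paper does not actually prove Lemma~\ref{lem: elliptic u}; it quotes the result from \cite{MR3685967} and \cite{Wang-Zhang-Zhao-Mach}, and the argument in those references is precisely the one you outline: tangentially differentiate, apply the base div--curl estimate of Lemma~\ref{lem: div-curl} to $\bp_ju$, recover $\p_3u$ algebraically from $\curl u,\dvg u,\bp_1u,\bp_2u$, and then interpolate to trade $\|u\|_{H^{s-1}}$ for $\|u\|_{L^2}$. So your strategy is correct and matches the intended one.

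Two small remarks. The $3\times3$ coefficient matrix in your normal-derivative recovery is $I$ plus a \emph{skew-symmetric} block rather than a positive semidefinite one; its determinant is $1+h_1^2+h_2^2$ with $h_j=\Hf_f^-(\p_jf)$, so the uniform invertibility you need still holds. Also, you invoke $\ka\ge4$ in several places (e.g.\ to put $\nabla h_j\in H^{\ka-2}\subset L^\infty$ for the commutator bound at the endpoint $s=2$); this is stronger than the stated hypothesis $\ka>\tfrac52$, so your argument as written establishes the lemma only in the range actually used in the paper.
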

Clearly, these two results also hold for the one-phase elliptic systems in $ \Om^+_f $ in a similar fashion.

\section{Paradifferential operators and commutator estimates}\label{sec: paradiff}

%
%\subsection{Paradifferential operators}

In this appendix, we shall recall some basic facts on paradifferential operators from \cite{MR2418072}.

We first introduce the symbols with limited spatial smoothness. Let $ W^{k,\infty}(\R^d) $ be the usual Sobolev spaces for $ k\in\mathbb{N} $.
\begin{defn}
	Given $ \mu\in[0,\,1] $ and $ m\in\R $, we denote by $ \Ga^m_{\mu}(\R^d) $ the space of locally bounded functions $ a(x,\,\xi) $ on $ \R^d\times\R^d\backslash\{0\} $, which are $ C^{\infty} $ with respect to $ \xi $ for $ \xi\neq0 $ such that, for all $ \al\in\mathbb{N}^d $ and $ \xi\neq0 $, the function $ x\to\p_{\xi}^{\al}a(x,\,\xi) $ belongs to $ W^{\mu,\infty} $ and there exists a constant $ C_{\al} $ such that
	\begin{equation*}
		\norm{\p_{\xi}^{\al}a(\cdot,\,\xi)}_{W^{\mu,\infty}}
		\leq C_{\al}(1+|\xi|)^{m-|\al|},
		\quadd
		\forall\,|\xi|\geq\frac{1}{2}.
	\end{equation*}
	The seminorm of the symbol is defined as
	\begin{equation*}
		M_{\mu}^m(a)
		:=\sup_{|\al|\leq\frac{3d}{2}+1+\mu}
		\sup_{|\xi|\geq\frac{1}{2}}
		\norm{(1+|\xi|)^{-m+|\al|}\p_{\xi}^{\al}a(\cdot,\,\xi)}_{W^{\mu,\infty}}
	\end{equation*}
	If $ a $ is a function independent of $ \xi $, then
	\begin{equation*}
		M^0_{\mu}(a)=\norm{a}_{W^{\mu,\infty}}.
	\end{equation*}
\end{defn}

\begin{defn}
	Given a symbol $ a $, the paradifferential operator $ T_a $ is defined by
	\begin{equation}\label{def: Ta operator}
		\wh{T_au}(\xi)
		:=(2\pi)^{-d}\int_{\R^d}
		\chi(\xi-\eta,\,\eta)
		\wh{a}(\xi-\eta,\,\eta)
		\psi(\eta)\wh{u}(\eta)\rd\eta,
	\end{equation}
	where $ \wh{a} $ is the Fourier transform of $ a $ with respect to the first variable. $ \chi(\xi,\,\eta)\in C^{\infty}(\R^d\times\R^d) $ is an admissible cutoff function, that is, there exist $ 0<\veps_1<\veps_2 $ such that
	\begin{equation*}
		\chi(\xi,\,\eta)=1
		\quad
		\text{for $ |\xi|\leq\veps_1|\eta| $},
		\quadd
		\chi(\xi,\,\eta)=0
		\quad
		\text{for $ |\xi|\geq\veps_2|\eta| $},
	\end{equation*}
	and
	\begin{equation*}
		|\p_{\xi}^{\al}\p_{\eta}^{\beta}\chi(\xi,\,\eta)|
		\leq C_{\al,\beta}
		(1+|\eta|)^{-|\al|-|\beta|}
		\quad
		\text{for $ (\xi,\,\eta)\in\R^d\times\R^d $}.
	\end{equation*}
	The cutoff function $ \psi(\eta)\in C^{\infty}(\R^d) $ satisfies
	\begin{equation*}
		\psi(\eta)=0
		\quad
		\text{for $ |\eta|\leq 1 $},
		\quadd
		\psi(\eta)=1
		\quad
		\text{for $ |\eta|\geq 2 $}.
	\end{equation*}
\end{defn}

The admissible cutoff function $ \chi(\xi,\,\eta) $ can be chosen as
\begin{equation*}
	\chi(\xi,\,\eta)=\sum_{k=0}^{\infty}
	\zeta_{k-3}(\xi)\vphi(\eta),
\end{equation*}
where $ \zeta(\xi)=1 $ for $ |\xi|\leq1.1 $, $ \zeta(\xi)=0 $ for $ |\xi|\geq 1.9 $, and
\begin{equation*}
\begin{cases}
	\zeta_k(\xi)=\zeta(2^{-k}\xi)
	& \text{for $ k\in\mathbb{Z} $}, \\
	\vphi_0=\zeta,
	\quad
	\vphi_k=\zeta_k-\zeta_{k-1}
	& \text{for $ k\geq 1 $}.
\end{cases}
\end{equation*}

We also introduce the Littlewood-Paley operators $ \Delta_k,\,S_k $ defined as
\begin{equation*}
	\Delta_k u =\mathcal{F}^{-1}(\vphi_k\wh{u})
	\quad
	\text{for $ k\geq0 $},
	\quadd
	\Delta_k u=0
	\quad
	\text{for $ k<0 $},
\end{equation*}
\begin{equation*}
	S_ku=\sum_{l\leq k}\Delta_lu
	\quad
	\text{for $ k\in\mathbb{Z} $}.
\end{equation*}

When the symbol $ a $ depends only on the first variable $ x $ in $ T_au $, we take $ \psi=1 $ in \eqref{def: Ta operator}. Then $ T_au $ is just usual Bony's paraproduct defined as
\begin{equation}\label{def: Tau paraproduct}
	T_au=\sum_{k=0}S_{k-3}a \Delta_ku.
\end{equation}
We have the following Bony's paraproduct decomposition:
\begin{equation}\label{def: au Bony's paraproduct}
	au=T_au+T_ua+R(a,\,u),
\end{equation}
where the remainder term $ R(a,\,u) $ is
\begin{equation*}
	R(a,\,u)=\sum_{|k-l|\leq2}\Delta_ka\Delta_lu.
\end{equation*}

\begin{lem}\label{lem: Bony product}
	There holds that
	\begin{enumerate}
	\item If $ s\in\R $ and $ \sigma<\frac{d}{2} $, then
	\begin{equation*}
		\norm{T_au}_{H^s}
		\les\min\{
		\norm{a}_{L^{\infty}}\norm{u}_{H^s},
		\,
		\norm{a}_{H^{\sigma}}\norm{u}_{H^{s+\frac{d}{2}-\sigma}},
		\,
		\norm{a}_{H^{\frac{d}{2}}}\norm{u}_{H^{s+}}\}.
	\end{equation*}

	\item If $ s>0 $ and $ s_1,\,s_2\in\R $ with $ s_1+s_2=s+\frac{d}{2} $, then
	\begin{equation*}
		\norm{R(a,\,u)}_{H^s}
		\les\norm{a}_{H^{s_1}}\norm{u}_{H^{s_2}}.
	\end{equation*}

	\item  If $ s>0 $, $ s_1\geq s $, $ s_2\geq s $ and $ s_1+s_2=s+\frac{d}{2} $, then
	\begin{equation}\label{ineq: Sobolev au}
		\norm{au}_{H^{s}}
		\les\norm{a}_{H^{s_1}}\norm{u}_{H^{s_2}}.
	\end{equation}
	
%	\item If $ s>0 $, $ r\geq s $ and $ r>s+\frac{d}{2}-\sigma $, then
%	\begin{equation*}
%		\norm{au-T_au}_{H^s}
%		\les\norm{a}_{H^r}\norm{u}_{H^{\sigma}}.
%	\end{equation*}
	\end{enumerate}
	
\end{lem}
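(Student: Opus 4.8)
The plan is to run the standard Littlewood--Paley/Bony calculus, reducing every $H^s$ norm to a weighted $\ell^2$ sum of dyadic blocks and exploiting frequency localization together with Bernstein's inequality $\norm{\Delta_j v}_{L^\infty}\les 2^{jd/2}\norm{\Delta_j v}_{L^2}$. Two elementary summation facts will be used throughout: (a) if $g_j$ is Fourier-supported in a dyadic annulus $\{|\xi|\sim 2^j\}$, then $\norm{\sum_j g_j}_{H^s}^2\les\sum_j 2^{2js}\norm{g_j}_{L^2}^2$ for every $s\in\R$ (apply $\Delta_k$; only $O(1)$ of the summands survive, then sum the resulting weighted $\ell^2$); and (b) if $g_j$ is Fourier-supported in a ball $\{|\xi|\les 2^j\}$ and $s>0$, then $\norm{\sum_j g_j}_{H^s}\les(\sum_j 2^{2js}\norm{g_j}_{L^2}^2)^{1/2}$ (apply $\Delta_k$; the surviving range is $j\gtrsim k$, and one convolves with the $\ell^1$ sequence $2^{-ms}\mathbf 1_{m\gtrsim 0}$, which is where $s>0$ enters).

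For (1), write $T_a u=\sum_k S_{k-3}a\,\Delta_k u$, whose $k$-th term has Fourier support in an annulus of size $2^k$; by (a), $\norm{T_a u}_{H^s}^2\les\sum_k 2^{2ks}\norm{S_{k-3}a}_{L^\infty}^2\norm{\Delta_k u}_{L^2}^2$. The first bound is then immediate from $\norm{S_{k-3}a}_{L^\infty}\le\norm a_{L^\infty}$. For the others, Bernstein gives $\norm{S_{k-3}a}_{L^\infty}\les\sum_{l\le k}2^{ld/2}\norm{\Delta_l a}_{L^2}=\sum_{l\le k}2^{l(d/2-\sigma)}\big(2^{l\sigma}\norm{\Delta_l a}_{L^2}\big)$: when $\sigma<d/2$ the geometric weight makes the top term $\les 2^{k(d/2-\sigma)}\norm a_{H^\sigma}$ dominate, yielding the second bound; when $\sigma=d/2$, Cauchy--Schwarz with weights $2^{\pm\veps l}$ gives $\norm{S_{k-3}a}_{L^\infty}\les_\veps 2^{\veps k}\norm a_{H^{d/2}}$, and reabsorbing $2^{\veps k}$ into the Sobolev exponent of $u$ produces the third bound with its $H^{s+}$ loss. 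Substituting into the displayed inequality finishes (1).

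For (2), decompose $R(a,u)=\sum_j g_j$ with $g_j=\sum_{|\nu|\le 2}\Delta_j a\,\Delta_{j+\nu}u$; each $g_j$ has Fourier support in a ball of radius $\les 2^j$, so by (b) (here $s>0$ is essential) $\norm{R(a,u)}_{H^s}\les(\sum_j 2^{2js}\norm{g_j}_{L^2}^2)^{1/2}$. Placing one factor in $L^\infty$ and using Bernstein, $\norm{g_j}_{L^2}\les 2^{jd/2}\norm{\Delta_j a}_{L^2}\norm{\Delta_j u}_{L^2}$, so with $s_1+s_2=s+d/2$ one gets $2^{js}\norm{g_j}_{L^2}\les\alpha_j\beta_j$ where $\alpha_j:=2^{js_1}\norm{\Delta_j a}_{L^2}$ and $\beta_j:=2^{js_2}\norm{\Delta_j u}_{L^2}$. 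Since $\sum_j\alpha_j^2\beta_j^2\le(\sum_j\alpha_j^2)(\sum_j\beta_j^2)\sim\norm a_{H^{s_1}}^2\norm u_{H^{s_2}}^2$, this proves (2).

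Finally, (3) follows from the Bony decomposition $au=T_a u+T_u a+R(a,u)$: the remainder is controlled by (2) with these same $s_1,s_2$; for $T_a u$ one invokes the second estimate of (1) with $\sigma=s_1$, legitimate when $s_1<d/2$, giving $\norm{T_a u}_{H^s}\les\norm a_{H^{s_1}}\norm u_{H^{s+d/2-s_1}}=\norm a_{H^{s_1}}\norm u_{H^{s_2}}$, and symmetrically for $T_u a$ with $\sigma=s_2$; the only remaining configuration is the threshold one in which one exponent equals $d/2$ and the other equals $s$, where $H^{d/2}\not\hookrightarrow L^\infty$ and one must accept a logarithmic (equivalently $\veps$-) loss. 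I expect that threshold behaviour of the paraproduct at regularity $H^{d/2}$ to be the only genuinely delicate point — it is precisely what forces the $H^{s+}$ in the third bound of (1) — while everything else is routine almost-orthogonality and Bernstein bookkeeping; in every later application the Sobolev indices are comfortably separated from $d/2$, so the threshold case does not actually intervene.
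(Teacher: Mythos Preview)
Your argument is the standard Littlewood--Paley/Bony proof and is correct. The paper does not actually prove this lemma: it appears in the appendix on paradifferential operators as one of several ``basic facts'' recalled from M\'etivier's monograph, stated without proof, so there is no argument in the paper to compare against.

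Your remark on the endpoint in (3) is also accurate. When one of $s_1,s_2$ equals $d/2$ (forcing the other to equal $s$), the paraproduct bound from (1) only delivers $H^{s+}$ on the right, and indeed $H^{d/2}\cdot H^s\not\hookrightarrow H^s$ in general; the non-strict inequalities $s_1,s_2\ge s$ in the statement are thus a minor imprecision. As you already observe, every invocation of this product estimate in the body of the paper has indices strictly separated from $d/2$, so nothing downstream is affected.
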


There is also the symbolic calculus of paradifferential operator in Sobolev spaces.
\begin{lem}\label{lem: symbol calculus}
	Let $ m,\,m'\in\R $.
	\begin{enumerate}
	\item If $ a\in\Ga^m_0(\R^d) $, then for any $ s\in\R $,
	\begin{equation*}
		\norm{T_a}_{H^s\to H^{s-m}}
		\les M^m_0(a).
	\end{equation*}
	
	\item if $ a\in\Ga^m_{\rho}(\R^d) $ and $ b\in\Ga^{m'}_{\rho}(\R^d) $ for $ \rho>0 $, then for any $ s\in\R $,
	\begin{equation*}
		\norm{T_aT_b-T_{a\sharp b}}_{H^s\to H^{s-m-m'+\rho}}
		\les M^m_{\rho}(a)M^{m'}_{0}(b)
		+M^m_{0}(a)M^{m'}_{\rho}(b),
	\end{equation*}
	where
	\begin{equation*}
		a\sharp b=\sum_{|\al|<\rho}
		\p_{\xi}^{\al}a(x,\,\xi)
		D_x^{\al}b(x,\,\xi),
		\quadd
		D_x=-\ri\p_x.
	\end{equation*}
	
	\item If $ a\in\Ga^m_{\rho}(\R^d) $ for $ \rho\in(0,\,1] $, then for any $ s\in\R $,
	\begin{equation*}
		\norm{(T_a)^*-T_{a^*}}_{H^s\to H^{s-m+\rho}}
		\les M^m_{\rho}(a),
	\end{equation*}
	where $ (T_a)^* $ is the adjoint operator of $ T_a $ and $ a^* $ is the complex conjugate of the symbol $ a $.
	\end{enumerate}
\end{lem}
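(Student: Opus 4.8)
The plan is to follow the classical Bony–Meyer development of the paradifferential calculus, exactly as in \cite{MR2418072}, since all three assertions are by now standard; I only sketch the route. The single engine behind every part is a \emph{spectral localization} of $ T_a $. Using the decomposition $ \chi(\xi,\eta)=\sum_{k\ge0}\zeta_{k-3}(\xi)\varphi_k(\eta) $ of the admissible cutoff, one rewrites $ T_au=\sum_{k\ge0}\mathrm{Op}(b_k)u $, where $ b_k(x,\xi)=(S^x_{k-3}a)(x,\xi)\,\varphi_k(\xi) $ and $ S^x_{k-3} $ is a Littlewood–Paley truncation in the \emph{first} variable only; because $ S^x_{k-3}a $ has $ x $-frequencies $ \lesssim2^{k-3} $, each $ \mathrm{Op}(b_k)u $ is supported in $ \{|\xi|\sim2^k\} $. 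Everything then reduces to one elementary lemma, which I would establish first: if $ b(x,\xi) $ is smooth and supported in $ \{|\xi|\le C2^k\} $, then $ \|\mathrm{Op}(b)\|_{L^2\to L^2}\lesssim\sup_{|\alpha|\le\lfloor d/2\rfloor+1}2^{-k|\alpha|}\|\partial_\xi^\alpha b\|_{L^\infty_{x,\xi}} $. This follows by writing $ \mathrm{Op}(b)u(x)=\int K(x,x-y)u(y)\,dy $ with $ K(x,z)=\int e^{iz\cdot\xi}b(x,\xi)\,d\xi $, integrating by parts in $ \xi $ to obtain $ |K(x,z)|\lesssim2^{kd}(1+2^k|z|)^{-N}\sup_{|\alpha|\le N}2^{-k|\alpha|}\|\partial_\xi^\alpha b\|_{L^\infty} $, and then applying Schur's test.

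For assertion (1), since $ a\in\Gamma^m_0(\R^d) $ gives $ \|\partial_\xi^\alpha b_k(\cdot,\xi)\|_{L^\infty}\lesssim M^m_0(a)\,2^{k(m-|\alpha|)} $ on $ |\xi|\sim2^k $, the $ L^2 $-lemma yields $ \|\mathrm{Op}(b_k)u\|_{L^2}\lesssim M^m_0(a)\,2^{km}\|\widetilde\Delta_ku\|_{L^2} $ with $ \widetilde\Delta_k $ a fattened Littlewood–Paley projector equal to $ 1 $ on $ \mathrm{supp}\,\varphi_k $. The pieces $ \mathrm{Op}(b_k)u $ being pairwise frequency-separated, I would then sum in $ \ell^2 $: $ \|T_au\|_{H^{s-m}}^2\lesssim\sum_k2^{2k(s-m)}\|\mathrm{Op}(b_k)u\|_{L^2}^2\lesssim M^m_0(a)^2\sum_k2^{2ks}\|\widetilde\Delta_ku\|_{L^2}^2\lesssim M^m_0(a)^2\|u\|_{H^s}^2 $.

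For assertions (2) and (3), I would write $ T_aT_bu=\sum_{k,\ell}\mathrm{Op}(b^a_k)\mathrm{Op}(b^b_\ell)u $; since $ \mathrm{Op}(b^b_\ell)u $ has frequencies $ \sim2^\ell $ while $ b^a_k $ injects only $ x $-frequencies $ \lesssim2^{k-3} $, only the pairs with $ k\le\ell+N_0 $ survive, and they regroup into a frequency-localized sum plus a commuting remainder. On each block one Taylor-expands the amplitude of $ \mathrm{Op}(b^a_k)\mathrm{Op}(b^b_\ell) $ in $ x $ about the diagonal: the order-zero term rebuilds $ T_{ab} $, the term of order $ |\alpha| $ rebuilds $ T_{\partial_\xi^\alpha a\,D_x^\alpha b} $, and truncating at $ |\alpha|<\rho $ leaves a remainder carrying $ \rho $ extra $ x $-derivatives on $ a $ or on $ b $. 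Feeding this remainder back into the $ L^2 $-lemma and summing over $ (k,\ell) $ gives precisely the gain $ H^s\to H^{s-m-m'+\rho} $ together with the mixed seminorms $ M^m_\rho(a)M^{m'}_0(b)+M^m_0(a)M^{m'}_\rho(b) $. Assertion (3) is the same Taylor expansion applied to the kernel of $ (T_a)^* $, whose leading symbol is $ a^* $ and whose corrections $ \propto\partial_\xi^\alpha D_x^\alpha a^* $ again cost $ \rho $ derivatives, producing the bound $ H^s\to H^{s-m+\rho} $ with constant $ M^m_\rho(a) $.

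The main obstacle is the bookkeeping in part (2): one must check that the surviving $ (k,\ell) $-sum genuinely reassembles into $ T_{a\sharp b} $ and that the Taylor remainder is controlled \emph{uniformly} in the dyadic indices, so that the double summation produces an honest operator bound rather than a divergent series, all the while tracking the exact split of the estimate between a $ \rho $-seminorm of one factor and an $ L^\infty $-type seminorm of the other. By contrast, the $ L^2 $-boundedness lemma and assertion (1) are routine once the localization is in place.
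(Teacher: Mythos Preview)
The paper does not prove this lemma at all: it is stated in Appendix~\ref{sec: paradiff} as a quoted fact from M\'etivier's monograph \cite{MR2418072}, with no argument given. Your sketch is the standard Bony--Meyer route that underlies the proof in that reference, so there is nothing to compare --- your proposal simply supplies what the paper deliberately omits by citation.
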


To estimate commutators, we recall a lemma from \cite{MR3260858} (Lemma 2.15).
\begin{lem}\label{lem: commutator Dt}
	Consider a symbol $ p=p(t,\,x,\,\xi) $ which is homogeneous of order $ m $. There holds that
	\begin{equation}\label{ineq: commutator with Dt}
		\norm{[T_p,\,\p_t+T_u\cdot\nabla]u}_{H^m}
		\les \BL M_0^m(p)\norm{u}_{C^{1+}_{\star}}
		+M_0^m(D_t p)\BR
		\norm{u}_{H^m}.
	\end{equation}
\end{lem}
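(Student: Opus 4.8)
The plan is to reduce the commutator to the single paradifferential operator $-T_{D_t p}$ modulo genuinely lower‑order remainders, and then close the estimate with the symbolic calculus of Lemma~\ref{lem: symbol calculus} and the paraproduct bounds of Lemma~\ref{lem: Bony product}. First I would carry out the algebraic reduction. Writing $\mathcal D=\p_t+T_u\cdot\nabla=\p_t+\sum_j T_{u_j}\p_j$ and using the Leibniz rule $\p_\alpha(T_b v)=T_{\p_\alpha b}v+T_b\p_\alpha v$ for $\alpha\in\{t,1,2\}$ (valid for paradifferential operators, $\p_{x_j}$ acting on the symbol), a direct computation gives
\[
[T_p,\mathcal D]u=\sum_j[T_p,T_{u_j}]\p_j u-T_{\p_t p}u-\sum_j T_{u_j}T_{\p_j p}u .
\]
Adding and subtracting $\sum_j T_{u_j\p_j p}u$ and using $\p_t p+\sum_j u_j\p_j p=D_t p$, this rearranges to
\[
[T_p,\mathcal D]u=-T_{D_t p}u+\sum_j[T_p,T_{u_j}]\p_j u+\sum_j\bigl(T_{u_j\p_j p}-T_{u_j}T_{\p_j p}\bigr)u=:\mathrm I+\mathrm{II}+\mathrm{III}.
\]

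For the main term $\mathrm I$, part~(1) of Lemma~\ref{lem: symbol calculus} gives $\norm{T_{D_t p}u}\les M_0^m(D_t p)\norm{u}$ in the appropriate Sobolev scale, which produces the $M_0^m(D_t p)\,\norm{u}_{H^m}$ contribution in \eqref{ineq: commutator with Dt}. For the remainders, since $T_p$ and $T_{u_j}$ have scalar principal symbols, these commute, so the composition formula of Lemma~\ref{lem: symbol calculus}(2) makes $[T_p,T_{u_j}]$ an operator of order $m-\rho$, where $\rho>0$ is the Hölder margin supplied by $u$; applied to $\p_j u$, which is one order below $u$, this yields a contribution $\les M_0^m(p)\norm{u}_{C^{1+}_{\star}}\norm{u}_{H^m}$. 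The difference $T_{u_j\p_j p}-T_{u_j}T_{\p_j p}$ is likewise of order strictly below $m$ by the composition formula (using that $u_j$ is $\xi$‑independent) together with Lemma~\ref{lem: Bony product}, contributing the same bound. This is exactly where the hypothesis $u\in C^{1+}_{\star}$ is used: it controls $\nabla u\in L^\infty$ and provides the small positive margin $\rho$ that the symbol calculus consumes in order to demote $\mathrm{II}$ and $\mathrm{III}$ to subprincipal terms.

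The main obstacle is the seminorm bookkeeping in the remainder estimates. One has to track where each $t$‑ or $x$‑derivative lands: the derivatives falling on the symbol $p$ must be reassembled exactly into the material derivative $D_t p$ (this is the purpose of the $\pm\sum_j T_{u_j\p_j p}u$ step above), so that what is left is honestly one order smoother, and one must check that only $M_0^m(p)$ and $M_0^m(D_t p)$ — together with the mild spatial regularity of $p$ under which this lemma is stated in \cite{MR3260858}, which in our applications ($p\in\{\lambda,\sqrt\lambda,\sang{\bp}^{\ka-3/2}\}$) holds up to the $H^\ka$‑regularity of $f$ — survive on the right, rather than, say, a seminorm involving $\nabla_x p$. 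A secondary point is that $\mathcal D$ is not a genuine pseudodifferential operator, so $\p_t$ must be commuted through $T_p$ by hand rather than via the symbol calculus, and the time dependence of $p$ is precisely what forces the $M_0^m(D_t p)$ term; once the reduction above is in place, however, these reduce to routine applications of the paradifferential calculus collected in Appendix~\ref{sec: paradiff}.
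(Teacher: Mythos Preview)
The paper does not supply its own proof of this lemma; it is quoted directly from Alazard--Burq--Zuily \cite{MR3260858} (Lemma~2.15), so there is no in-paper argument to compare against. Your reduction of the commutator to $-T_{D_tp}u$ plus the subprincipal remainders $\sum_j[T_p,T_{u_j}]\p_ju$ and $\sum_j(T_{u_j\p_jp}-T_{u_j}T_{\p_jp})u$ is exactly the standard route by which the cited result is established, and your diagnosis of the one genuine bookkeeping issue is correct: the remainder bounds coming from Lemma~\ref{lem: symbol calculus}(2) in fact consume a seminorm of the type $M_1^m(p)$ (spatial $W^{1,\infty}$ control of the symbol), not only $M_0^m(p)$ as the paper's transcription of the inequality suggests, which is harmless here since the symbols $\lambda$, $\sqrt\lambda$, $\sang{\bp}^{\ka-3/2}$ actually used all carry that extra regularity.
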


\section*{Declaration}

\textbf{Conflict of interest}: On behalf of all authors, the corresponding author states that there is no conflict of interest. Our manuscript has no associated data.

\bibliographystyle{abbrv}

\end{document}